\documentclass[10pt]{article}

\usepackage{amsmath,amsfonts,latexsym, amsrefs,amssymb,amsthm}
\usepackage{dsfont}
\usepackage{stmaryrd}
\usepackage{wrapfig}
\usepackage{graphicx}
\usepackage{enumerate}
\usepackage{color}

\usepackage{mathrsfs}

\linespread{1.05}        

\normalfont
\usepackage[T1]{fontenc}

\newtheorem{theorem}{Theorem}
\newtheorem*{theorem*}{Theorem}

\newtheorem{lemma}[theorem]{Lemma}
\newtheorem{proposition}[theorem]{Proposition}

\setlength{\textwidth}{160mm}    
\setlength{\oddsidemargin}{0mm}	 


\addtolength{\oddsidemargin}{0.3in}
\addtolength{\evensidemargin}{0.3in}
\addtolength{\textwidth}{-0.4in}

\addtolength{\topmargin}{-0.65in}
\addtolength{\textheight}{1.2in}

\DeclareMathOperator{\tr}{Tr}
\DeclareMathOperator{\HH}{H}
\DeclareMathOperator{\RR}{\mathbb{R}}
\DeclareMathOperator{\NN}{\mathbb{N}}
\DeclareMathOperator{\ZZ}{\mathbb{Z}}

\DeclareMathOperator{\OO}{O}
\DeclareMathOperator{\oo}{o}
\DeclareMathOperator{\im}{Im}
\DeclareMathOperator{\re}{Re}

\newcommand{\E}{\mathbb{E}}

\newcommand{\1}{\mathds{1}}
\newcommand{\rd}{{\mathrm{d}}}            
\newcommand{\ii}{{\mathrm{i}}}

\newcommand{\e}{\varepsilon}
\renewcommand{\epsilon}{\varepsilon}

\title{Strong Szeg{\H o} asymptotics and zeros of the zeta function}

\date{}

\author{Paul Bourgade\quad
Jeffrey Kuan
 \\\\
Department of Mathematics, Harvard University\\
Cambridge MA 02138, USA \\   bourgade@math.harvard.edu \quad
 jkuan@math.harvard.edu
}

\begin{document}

\maketitle


\begin{abstract}
Assuming the Riemann hypothesis, we prove the weak convergence of linear statistics of the zeros of L-functions to a Gaussian field,
with covariance structure corresponding to the $\HH^{1/2}$-norm of the test functions.
For this purpose, we obtain an approximate form of the explicit formula, relying on
Selberg's smoothed expression for $\zeta'/\zeta$ and the Helffer-Sj\"ostrand functional calculus.
Our main result is  an analogue of the strong Szeg{\H o} theorem, known for Toeplitz operators and random matrix theory.
\end{abstract}

{\bf AMS Subject Classification (2010):} 11M06, 11M50, 15B52.

\medskip

\medskip

{\it Keywords:} Strong Szeg{\H o} theorem, Central limit theorem, Zeta and L-functions, Selberg class.

\section{Introduction}

A connection between $\zeta$ zeros and random matrix theory was discovered
by Montgomery \cite{Mon1973}, who  examined
 the pair correlation of the zeta zeros. Dyson was the first to notice that this pair correlation agrees with the pair correlation of the eigenvalues of stochastic Hermitian matrices
with properly distributed Gaussian entries.
Assuming the Riemann hypothesis  (we denote by $1/2\pm\ii\gamma_j$, $\gamma_j\in\RR$, $0\leq \gamma_1\leq \gamma_2\leq \ldots$,
the set of non-trivial zeros), Montgomery proved that
$$
\frac{1}{x}\sum_{1\leq j,k\leq x, j\neq k}f(\tilde\gamma_j-\tilde\gamma_k)\underset{x\to
\infty}{\longrightarrow}\int_{-\infty}^{\infty}f(y) \left(1-\left(\frac{\sin{\pi y}}{\pi y}\right)^2\right)
\rd y ,
$$
where the $\tilde\gamma$'s are the rescaled zeta zeros ($\tilde\gamma=\frac{\gamma}{2\pi}\log\gamma$: at height $t$,
the average gap between zeros is $2\pi/\log t$), and the  test function $f$ has a smooth
Fourier transform supported in $(-1,1)$. A fundamental conjecture in analytic number theory concerns
removing this support condition.  This would imply, for example, new estimates on large gaps between primes,
but it seems out of reach with available techniques. In particular, this requires a better understanding of some asymptotic correlations between primes,  such as the Hardy-Littlewood conjectures, as shown in \cite{BogKea1996}.
 Further examples of this connection
appear in \cite{RudSar1996} for the correlation functions of order greater than 2, in
\cite{KatSar1999} for the function-field L-functions,
and in \cite{KeaSna2000I} for the conjectured asymptotics of the moments of $\zeta$ along the critical axis. 

 By looking at linear statistics, Hughes and Rudnick \cite{HugRud2002} demonstrate another way  
to exhibit the repulsion between the $\zeta$ zeros at the microscopic scale.
They showed that if the function $f$ has a smooth Fourier transform supported on $(-2/m,2/m)$, then the first $m$ moments
of the linear statistics (here and in the following $\omega$ is a uniform random variable on $(1,2)$)
\begin{equation}\label{eqn:HR}
\sum_{\gamma} f(\tilde\gamma-\omega t)
\end{equation} 
converge\footnote{By analogy with what is known in random matrix theory \cite{HugRud2003}, the higher moments supposedly do not converge towards those of this Gaussian random variable (see also \cite{HugMil2007} for a similar rigorous fact about non-Gaussianness in the context of low--lying zeros of L--functions).} to those of a Gaussian random variable as $t\to\infty$.
We propose to look at linear statistics at a larger (mesoscopic) scale.

Contrary to the Dyson-Montgomery analogy, observed at the microscopic level of nearest zeros spacings,
the mesoscopic regime involves a larger window and  yields
Gaussian fluctuations.
Indeed, Selberg proved, unconditionally, 
the following central limit theorem \cite{Sel1944,Sel1946,Sel1992}:
if $\omega$ is uniform on (1,2), as $t\to\infty$,
$$
\frac{\log \zeta\left(\frac{1}{2}+ \ii \omega t\right)}{\sqrt{\log\log t}}\to \mathcal{N}_1+ \ii\mathcal{N}_2,
$$
where $\mathcal{N}_1$ and $\mathcal{N}_2$ are independent standard normal random variables.
This is related to the fluctuations between the number of zeros with imaginary part in $[0,t]$ and their expected number.
The very small normalization in this convergence in law indicates the repulsion of the zeros. 
This central limit theorem was extended by Fujii to the fluctuations when counting zeros in smaller (but still mesoscopic) intervals
\cite{Fuj1999}.
Central limit
theorems concerning counting the number of eigenvalues of random matrices appeared originally  in \cite{CosLeb1995} for Gaussian ensembles and \cite{KeaSna2000I, Wie2002} for the circular unitary ensemble.

In this paper, we extend  (conditioned on the Riemann hypothesis)
these results on Gaussian fluctuations of zeros of L-functions to smoother statistics than
indicator functions of intervals. This includes an analogue of the strong Szeg{\H o} theorem,
seen originally as the second-order asymptotics of Toeplitz determinants as the dimension increases.
It is also related, by Heine's formula, to linear statistics of eigenangles of 
Haar-distributed unitary matrices, $S_n(f)=\sum_1^n f(\theta_n)$. Indeed,  for $f$ with mean $0$ on $[0,2\pi]$ satisfying $f(0)=f(2\pi)$, and
$\lambda\in\mathbb{R}$, then the strong Szeg{\H o} theorem states that
$$
\mathbb{E}\left(e^{\lambda S_n(f)}\right)
\underset{n\to\infty}{\longrightarrow}
\exp\left(\frac{1}{2}\lambda^2\sum_{k=-\infty}^\infty \vert k \vert \cdot |\hat{f}_k|^2\right),
$$
where the $\hat{f}_k$'s are the Fourier coefficients of $f$ ($\hat f_j=\frac{1}{2\pi}\int f(\theta)e^{-\ii j\theta}\rd\theta$).
In probabilistic terms, the convergence of the above Laplace transform means that the
linear statistics of the eigenvalues converge with no normalization to a normal random variable with
variance $\sum_{\ZZ} |k| |\hat{f}_k|^2$; the only restriction is that this limiting variance is finite. This was extended by Johansson in the context of
Coulomb gases on the unit circle \cite{Joh1988} and on the real line \cite{Joh1998}. Other proofs of the strong Szeg{\H o} theorem were given,
relying for example on combinatorics \cite{Kac1954},
on representation theory \cite{DiaSha1994,DiaEva2001},
on the steepest descent method for Riemann-Hilbert problems \cite{Dei1999},
on the Borodin-Okounkov formula \cite{BorOko2000} (see \cite{Sim2005} for many on these distinct proofs).

These linear statistics asymptotics were extended by Diaconis and Evans \cite{DiaEva2001},
to the general setting of more irregular test functions.
Under the hypothesis $f\in L^2(\mathbb{T})$, denoting $\sigma_n^2=
\sum_{j=-n}^n|j||\hat f_j|^2$, they proved that if $(\sigma_n)_{n\geq 1}$ is slowly varying then,
as $n\to\infty$,
$\frac{S_n-\E(S_n)}{\sigma_n}$ converges in distribution to a standard normal random variable.
This wide class of possible test functions  includes the smooth and indicator cases.
For many determinantal point processes,
a similar central limit theorem was obtained by Soshnikov under weak assumptions on the regularity of $f$ \cite{Sos2002}.
Moreover, for smoother test functions $f$,  he proved a local version of the strong Szeg{\H o} theorem \cite{Sos2000}: 
the linear statistics are of type (e.g. for the unitary group) $\sum_{k=1}^n f(\lambda_n \theta_k)$,
for a parameter $\lambda_n$ satisfying\footnote{In the following $a\ll b$ means $a=\oo(b)$.} $1\leq \lambda_n\ll n$. The last inequality means that we keep in the mesoscopic regime.

Our purpose consists of an analogue of the above results for linear functionals of zeros of the zeta function. This concerns linear statistics of type
\begin{equation}\label{Sz}
\sum_\gamma f (\lambda_t(\gamma-\omega t)),
\end{equation}
where
$\omega$ is uniform on $(1,2)$, as in (\ref{eqn:HR}), but now the condition $\lambda_t\ll \log t$ gives the mesoscopic regime: the number of
zeros visited by $f$ goes to infinity.

In the following statements,  $\omega$ is uniform on $(1,2)$,  we denote by $\{1/2+\ii\gamma\}$ ($\gamma\in\RR$,
we assume the Riemann hypothesis)
the multiset of non-trivial zeros
of $\zeta$, counted with repetition. We define $\gamma_t=\lambda_t(\gamma-\omega t)$ 
and $\sigma_t(f)^2=\int_{-(\log t)/\lambda_t}^{(\log t)/\lambda_t}|u| |\hat f(u)|^2\rd u$, where $\hat f(u)=\frac{1}{2\pi}\int
f(x)e^{-\ii u x}\rd x$.
Moreover, the centered, normalized linear statistics are  denoted
$$S_t(f)=
\sum_\gamma f(\gamma_t)-\frac{\log t}{2\pi\lambda_t}\int f(u)\rd u.$$
Our first result states that, for functions with sufficient regularity, the linear statistics converge to a Gaussian field with covariance function given by ($f,g$ are real functions, for notational simplicity)
$$
\langle f,g\rangle_{\HH^{1/2}}=\Re\int_{\RR}|u| \hat f(u)\overline{\hat g(u)}\rd u=
-\frac{2}{\pi^2}\int f'(x)g'(y)\log|x-y|\rd x\rd y,
$$
where we refer to \cite{DysMeh63} equation (18) for the last equality. Our technical assumptions on $f$ are the  following: sufficient decay of $f$ at $\pm\infty$, bounded variation of $f$, and sufficient decay of $\hat{f}$ at $\pm\infty$. More specifically,
\begin{align}
\label{eqn:decrease}
&\text{for some}\ \delta>0\ \text{and}\ \vert x\vert\ \text{large enough}, \ f(x), f'(x), f''(x)\ \text{exist and are}\ \OO\left(x^{-2-\delta}\right),\\
&g(x):=f'(x),\ \int(1+|u\log u|)|\rd g(u)|<\infty,\label{eqn:cond1}\\
&\xi|\hat f(\xi)|^2,\,(\xi|\hat f(\xi)|^2)'=\OO(\xi^{-1}).\label{eqn:Fourier}
\end{align}
Our assumptions on $f$  easily include the cases of compactly supported $\mathscr{C}^2$ functions, for example.
We will also assume that $\|f\|_{\HH^{1/2}}<\infty$, and
note that, as discussed in \cite{DiaEva2001}, there is no good characterization of the space $\mathcal{H}^{1/2}
=\{\|f\|_{\HH^{1/2}}<\infty\}$ in terms of the local regularity of $f$. In particular, it is likely that our
assumption (\ref{eqn:cond1}) may be slightly relaxed. 

The assumption (\ref{eqn:Fourier}) appears necessary in 
a second moment calculation (Lemma
\ref{Second Moment}), and there is no analogous restriction in the case of random matrices \cite{DiaEva2001}; 
it is certainly possible to slightly weaken it but we do not pursue this goal here, as (\ref{eqn:Fourier}) obviously already allows smooth functions but also indicators.

\begin{theorem}\label{thm:1} 
Let $f_1,\dots,f_k$ be functions in $\mathcal{H}^{1/2}$ satisfying properties (\ref{eqn:decrease}), (\ref{eqn:cond1}), (\ref{eqn:Fourier}). Assume the Riemann hypothesis and that $1\ll\lambda_t\ll \log t$.
Then the random vector $(S_t(f_1),\dots,S_t(f_k))$
converges in distribution to a centered Gaussian vector $(S(f_1),\dots,S(f_k))$ with correlation structure.
$$
\E(S(f_h)S(f_\ell))=\langle f_h,f_\ell\rangle_{\HH^{1/2}}.
$$
\end{theorem}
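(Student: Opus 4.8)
The plan is to establish convergence of the Laplace transforms $\E\left(\exp\left(\sum_{h=1}^k \lambda_h S_t(f_h)\right)\right)$ to the Gaussian value $\exp\left(\frac{1}{2}\sum_{h,\ell}\lambda_h\lambda_\ell\langle f_h, f_\ell\rangle_{\HH^{1/2}}\right)$; by linearity it suffices to treat a single test function $f=\sum\lambda_h f_h$, which still satisfies (\ref{eqn:decrease})--(\ref{eqn:Fourier}), so the goal reduces to showing $\E(e^{S_t(f)})\to e^{\frac{1}{2}\|f\|_{\HH^{1/2}}^2}$. The backbone is an \emph{approximate explicit formula}: one writes $\sum_\gamma f(\gamma_t)$ in terms of a sum over primes (plus the smooth main term $\frac{\log t}{2\pi\lambda_t}\int f$) by combining the Riemann--von Mangoldt explicit formula with Selberg's smoothed expression for $\zeta'/\zeta$. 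The role of the Helffer--Sj\"ostrand functional calculus is to pass from the genuinely analytic test functions allowed by the classical explicit formula to the rougher class (\ref{eqn:decrease})--(\ref{eqn:cond1}): writing $f(\gamma_t)$ via an almost-analytic extension $\tilde f$ and integrating against $\rd\tilde f$ against the resolvent-type quantity $\sum_\gamma \frac{1}{\gamma_t - z}$, whose imaginary part on the critical line is controlled by $\im\frac{\zeta'}{\zeta}$.

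\textbf{Key steps.} First, I would prove that $S_t(f)$ is, up to an error term going to $0$ in probability (and in an appropriate $L^p$ sense so that it may be exponentiated), equal to a short Dirichlet sum of the shape $-\sum_{n\le X} \frac{\Lambda(n)}{\sqrt n}\,\widehat{(\text{something})}(n)\,(n^{\ii\omega t} + n^{-\ii\omega t})$, where the cutoff $X$ is a small power of $t$ and the weights come from $\hat f$ evaluated at $\frac{\log n}{\lambda_t}$; the precise second-moment bookkeeping here is exactly where hypothesis (\ref{eqn:Fourier}) enters, via Lemma~\ref{Second Moment}. Second, I would compute the limiting mean and variance of this prime sum: the expectation over $\omega$ of $n^{\ii\omega t}$ is $O(1/(t\log n))$, so the mean of the prime sum vanishes, while $\E(n^{\ii\omega t}\overline{m^{\ii\omega t}})\to \1_{n=m}$, giving (after summing over prime powers and using the prime number theorem to replace $\sum_{p\le X}\frac{\log^2 p}{p}$-type sums by logarithms) a variance that telescopes to $\int_{-\lambda_t}^{\lambda_t}|u||\hat f(u)|^2\rd u = \sigma_t(f)^2$, and the condition $\lambda_t\ll\log t$ guarantees $X\to\infty$ so $\sigma_t(f)^2\to\|f\|_{\HH^{1/2}}^2$ (using $\|f\|_{\HH^{1/2}}<\infty$). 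Third, I would upgrade convergence of moments to convergence of the Laplace transform: since the prime sum is a sum of weakly dependent, bounded-in-$L^p$ terms $n^{\ii\omega t}$ with rapidly decaying weights, a moment method (à la Hughes--Rudnick / Selberg) shows the $r$-th centered moment converges to the Gaussian one for every fixed $r$, and the tail control from (\ref{eqn:Fourier}) plus the sub-Gaussian-type bound on partial sums lets us justify exchanging limit and expectation in $\E(e^{S_t(f)})$.

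\textbf{Main obstacle.} The hard part is the first step: proving the approximate explicit formula with an error term small enough to survive exponentiation, for test functions of the low regularity permitted by (\ref{eqn:decrease})--(\ref{eqn:cond1}). The classical explicit formula demands test functions whose Fourier transforms are compactly supported (or entire of exponential type), so one cannot apply it directly; the Helffer--Sj\"ostrand representation $f(\gamma_t)=\frac{1}{\pi}\int_{\mathbb{C}}\bar\partial\tilde f(z)\,\frac{\rd x\rd y}{\gamma_t-z}$ converts the problem into controlling $\sum_\gamma \frac{1}{\gamma_t-z}$ uniformly as $\im z\to 0$, where the only available input is Selberg's smoothed formula for $\frac{\zeta'}{\zeta}(\sigma+\ii\tau)$ valid slightly to the right of the critical line — so one must trade off the distance to the line against the cutoff $X$ and against the bound $\int(1+|u\log u|)|\rd g(u)|<\infty$ on the variation of $f'$. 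Managing this three-way trade-off, while keeping the error term bounded in enough $L^p$ norms (not merely in probability) to control $\E(e^{S_t(f)})$, is the technical core of the argument; everything downstream is a combination of the prime number theorem and the standard moment computation for $\sum n^{\ii\omega t}$.
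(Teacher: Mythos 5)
Your overall architecture — Cramér–Wold reduction, approximate explicit formula via Helffer–Sjöstrand plus Selberg's smoothed $\zeta'/\zeta$, second-moment bookkeeping where (\ref{eqn:Fourier}) enters, then a CLT for the random Dirichlet sum $\sum_p a_{pt}p^{\ii\omega t}$ — is the right skeleton and matches the paper closely. Two points, however, deserve attention.

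\textbf{Laplace transform is too strong a target.} You frame the goal as proving $\E(e^{S_t(f)})\to e^{\frac{1}{2}\|f\|_{\HH^{1/2}}^2}$, and you correctly note that exponentiating would require bounding the error term in strong $L^p$ (indeed sub-exponential) norms. But Theorem~\ref{thm:1} only asserts convergence in distribution, and Proposition~\ref{prop:appExpl} delivers an error $E(\omega,t)$ controlled merely in $L^1$ (namely $\E|X(\omega,t)|$ uniformly bounded). That control is nowhere near enough for your Laplace-transform program; the paper sidesteps this entirely. It establishes that the prime sum converges weakly to a Gaussian via Proposition~\ref{prop:CLT} (a CLT for $\sum_p a_{pt}p^{-\ii\omega t}$ from \cite{Bou2010}, itself going back to an idea of Soundararajan), and then concludes by Slutsky's theorem, for which convergence of the error to $0$ \emph{in probability} suffices. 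Your proposed moment method à la Hughes--Rudnick is a perfectly reasonable substitute for Proposition~\ref{prop:CLT} as the driver of the CLT for the prime sum, but you should target distributional convergence plus Slutsky rather than convergence of $\E(e^{S_t(f)})$; the latter is not proved and arguably is false at this level of error control.

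\textbf{Missing the mollification.} The error term (\ref{eqn:error}) in Proposition~\ref{prop:appExpl} contains $\|f''\|_1$. For the admitted test functions ($f'$ of (weighted) bounded variation, $f''$ only assumed to exist for $|x|$ large) this quantity can be infinite, so you cannot feed $f$ directly into the explicit formula. The paper's resolution — which you did not identify — is to mollify: set $f_\e=f*\phi_\e$ with a carefully tuned $\e=\e_t\to 0$ (ultimately $\e_t=\lambda_t/\log t$), apply the explicit formula to $f_\e$ (for which $\|f_\e''\|_1\lesssim \e^{-1}\int|\rd f'|$ by a standard convolution estimate), and then separately show that both $\sum_\gamma\bigl(f(\gamma_t)-f_\e(\gamma_t)\bigr)$ and the induced difference in the prime sum $Y_\e-Y$ vanish, the latter in $L^2$ via Montgomery--Vaughan. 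This is the technical core you flag under ``Main obstacle'' without resolving; the three-way trade-off you describe (distance to the line, cutoff $X$, regularity of $f$) is in the paper resolved by the interplay of $\e_t$, $u=t^{1/2}$, and the condition $1\ll\lambda_t\ll\log t$. Without the mollification, your step one does not go through for the class of test functions in the theorem.
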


The absence of normalization for the above convergence in law is a tangible sign of 
repulsion between the $\zeta$ zeros.
 However, there are differences between our result and the strong Szeg{\H o}  theorem: in particular, the rate of convergence to the limiting Gaussian is expected to be slow in our situation, while it is extremely fast in the case of random unitary  matrices \cite{Joh1997}.

In the following theorem, for diverging variance of linear statistics, the bounded variation assumption is weaker:
\begin{equation}
\label{eqn:cond2}
\int(1+|u\log u|)|\rd f(u)|<\infty.
\end{equation}

\begin{theorem}\label{thm:2}
Suppose $f$ satisfies (\ref{eqn:decrease}), (\ref{eqn:cond2}), (\ref{eqn:Fourier}), and that
$\sigma_t(f)$ diverges. Assume the Riemann hypothesis and that $1\ll\lambda_t\ll \log t$. Then, as $t\to\infty$, $S_t(f)/\sigma_t(f)$
converges in distribution to a standard Gaussian random variable.
\end{theorem}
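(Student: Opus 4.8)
The plan is to mimic the structure behind Theorem \ref{thm:1}, namely to establish an approximate version of the explicit formula that expresses $S_t(f)$ as a sum over primes plus a negligible error, and then to apply a central limit theorem for such Dirichlet-polynomial-type sums. The key point is that the hypotheses on $f$ enter only through the second-moment estimate (Lemma \ref{Second Moment}) and through the control of the error terms in the approximate explicit formula, so I expect most of the machinery developed for Theorem \ref{thm:1} to transfer verbatim; what must be redone is the bookkeeping in the regime where $\sigma_t(f)\to\infty$, where one can afford a weaker bounded-variation assumption (\ref{eqn:cond2}) in place of (\ref{eqn:cond1}) because one is allowed to divide by the diverging normalization $\sigma_t(f)$.

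\medskip

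\noindent First I would use Selberg's smoothed formula for $\zeta'/\zeta$ together with the Helffer--Sj\"ostrand functional calculus (as in the proof of Theorem \ref{thm:1}) to write
$$
S_t(f)=\sum_\gamma f(\gamma_t)-\frac{\log t}{2\pi\lambda_t}\int f(u)\,\rd u
= -\re\sum_{n\geq 2}\frac{\Lambda(n)}{\sqrt n}\,n^{-\ii\omega t}\,w_t(n)\,\widehat{f}\!\left(\tfrac{\log n}{\lambda_t}\right)+\mathcal{E}_t,
$$
for suitable smooth cutoff weights $w_t(n)$ supported on $n\leq t^{A}$ for some small $A$, with $\mathcal{E}_t$ an error term. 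The condition (\ref{eqn:decrease}) controls $f$ at infinity so that the truncation of the test function is legitimate, (\ref{eqn:cond2}) controls the contribution of the derivative measure $\rd f$ in the Helffer--Sj\"ostrand representation (the factor $|u\log u|$ absorbs the $\log$ losses coming from the prime sum), and (\ref{eqn:Fourier}) is exactly what is needed to bound the variance of the off-diagonal / higher-order terms in the second-moment computation. Here the divergence of $\sigma_t(f)$ is what allows (\ref{eqn:cond2}) rather than (\ref{eqn:cond1}): after dividing by $\sigma_t(f)$, an error of size $o(\sigma_t(f))$ is negligible, which is a much softer requirement than the $o(1)$ error one needs in Theorem \ref{thm:1}.

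\medskip

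\noindent Next I would compute the variance of the main prime sum. Averaging over $\omega$ kills the cross terms $n^{-\ii\omega t}\overline{m^{-\ii\omega t}}$ with $n\neq m$ up to a small error (orthogonality of $(n/m)^{\ii\omega t}$ over $\omega\in(1,2)$), leaving the diagonal
$$
\frac{1}{2}\sum_{n\geq 2}\frac{\Lambda(n)^2}{n}\,w_t(n)^2\,\Big|\widehat f\!\left(\tfrac{\log n}{\lambda_t}\right)\Big|^2
\sim \frac{1}{2}\int_0^{\lambda_t}\frac{\log n}{n}\cdot\frac{\text{(density of prime powers)}}{\cdots}\,\Big|\widehat f\!\left(\tfrac{\log n}{\lambda_t}\right)\Big|^2\,\rd(\log n),
$$
which by the prime number theorem (Mertens-type summation, the $\Lambda(n)^2/n$ weight being essentially $\rd(\log n)$ on prime powers) is asymptotic to $\tfrac12\int_0^{\lambda_t} u\,|\widehat f(u)|^2\,\rd u$, i.e. $\tfrac12\sigma_t(f)^2$ up to the real-part symmetrization. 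This is the content of Lemma \ref{Second Moment}, and assumption (\ref{eqn:Fourier}) is precisely the regularity of $u\mapsto u|\widehat f(u)|^2$ that makes this Riemann-sum-to-integral passage valid with controlled error. One must also check $\mathcal{E}_t=o(\sigma_t(f))$, using RH to bound the contribution of zeros in the Selberg formula; the slow growth $\lambda_t\ll\log t$ keeps the prime sum in the mesoscopic range so that the number of relevant primes, hence $\sigma_t(f)^2$, grows, which is what forces $\sigma_t(f)\to\infty$ to be a meaningful normalization.

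\medskip

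\noindent Finally, having reduced matters to the normalized prime sum $\sigma_t(f)^{-1}\re\sum_n c_n(t)\,n^{-\ii\omega t}$ with $\sum_n|c_n(t)|^2\sim\tfrac12\sigma_t(f)^2\to\infty$ and $\max_n|c_n(t)|/\sigma_t(f)\to 0$, I would invoke the method of moments over $\omega$: the $p$-th moment of this sum reduces, again by orthogonality of multiplicative characters $(n/m)^{\ii\omega t}$, to counting solutions of multiplicative equations $n_1\cdots n_a = m_1\cdots m_b$ among prime powers, and standard estimates show the only surviving contribution comes from perfect pairings, giving Wick's formula and hence convergence to the Gaussian. The Lindeberg-type smallness of individual coefficients (which holds here because $|c_n|\lesssim \Lambda(n)n^{-1/2}\lesssim (\log t)\,t^{-1/4}$ while $\sigma_t(f)\to\infty$) is what rules out the non-pairing diagonal contributions. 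The main obstacle, as in Theorem \ref{thm:1}, is controlling the error term $\mathcal{E}_t$ in the approximate explicit formula uniformly — in particular the contribution of the tail of the prime sum beyond the Selberg cutoff and the oscillatory terms involving zeros — and showing it is $o(\sigma_t(f))$; everything else is the robust Dirichlet-polynomial CLT machinery combined with the prime number theorem asymptotics for the variance.
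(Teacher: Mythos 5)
Your outline captures the right global architecture (approximate explicit formula reducing $S_t(f)$ to a Dirichlet polynomial in primes, second-moment identification of $\sigma_t(f)^2$, then a CLT for the random Dirichlet polynomial), but it has a genuine gap at the very first step: you propose to apply the Helffer--Sj\"ostrand representation directly to $f$, and you say that (\ref{eqn:cond2}) ``controls the contribution of the derivative measure $\rd f$ in the Helffer--Sj\"ostrand representation.'' This does not work as stated. Proposition \ref{prop:appExpl} is proved for $f\in\mathscr{C}^2$, and the Helffer--Sj\"ostrand formula (\ref{eqn:HS}) requires $f''$ pointwise; a function satisfying only (\ref{eqn:decrease}) and (\ref{eqn:cond2}) is merely of bounded variation (e.g.\ an indicator), and you cannot substitute the measure $\rd f$ for $f''$ in the HS representation without additional work. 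The error term (\ref{eqn:error}) also genuinely involves $\|f''\|_1$, which may be infinite for BV test functions. You have therefore skipped the key technical device of the paper's proof.

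The paper's route is to mollify: set $f_\e=f*\phi_\e$, which is $\mathscr{C}^\infty$, apply Proposition \ref{prop:appExpl} to $f_\e$, and then balance two competing sources of error. On one side, replacing $f$ by $f_\e$ in the linear statistic and in the prime sum costs $\OO(\e\,\tfrac{\log t}{\lambda_t})$ (first and second steps), so one needs $\e\ll\tfrac{\lambda_t}{\log t}\sigma_t$. On the other side, the error (\ref{eqn:error}) for $f_\e$ involves $\|f_\e''\|_1\leq\e^{-1}\int|\rd f|$, costing $\OO(\tfrac{\lambda_t}{\log t}\e^{-1})$, so one needs $\e\gg\tfrac{\lambda_t}{\sigma_t\log t}$. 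The divergence of $\sigma_t(f)$ is precisely what opens the window $\tfrac{\lambda_t}{\sigma_t\log t}\ll\e_t\ll\tfrac{\lambda_t}{\log t}\sigma_t$ (a valid choice is $\e_t=\lambda_t/\log t$), and this is where the weaker assumption (\ref{eqn:cond2}) instead of (\ref{eqn:cond1}) is exploited. Without the mollification and this balancing there is no way to invoke Proposition \ref{prop:appExpl} at all. Your remark that the divergence of $\sigma_t$ makes an $\oo(\sigma_t)$ error acceptable is the right intuition, but the mechanism by which that error even becomes finite is the mollification, which you omit.

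Two lesser remarks. First, for the final CLT the paper does not run a method of moments from scratch but invokes Proposition \ref{prop:CLT} (from \cite{Bou2010}), whose hypotheses are $\sup_p|a_{pt}|\to0$, $\sum_p|a_{pt}|^2\to\sigma^2$, and the tail condition (\ref{eqn:TailCondition}); your Lindeberg observation corresponds to the first hypothesis, and the third step of the paper produces the $m_t$ for (\ref{eqn:TailCondition}) using $\hat f(x)=\OO(x^{-1})$ from bounded variation — this tail condition needs to be verified explicitly and is not automatic from the variance asymptotics. Second, the contribution of $n=p^k$ with $k\geq2$ is handled separately via Montgomery--Vaughan in Lemma \ref{lem:largepowers} and should not be folded loosely into ``the prime sum''; this cleanup is part of establishing (\ref{laststep2}).
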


Although there is a normalization in the above theorem, it is typically  very small. For example, in the
allowed case when $f$ is an indicator function and $\lambda_t$  grows very slowly, $\sigma_t^2$ will be of order $\log\log t$, agreeing with\ the central limit theorem
proved (unconditionally) by Selberg.

As we already noted, the condition $\lambda_t\ll \log t$ implies the mesoscopic scale. The condition $1\ll \lambda_t$ is less natural.
Supposedly, asymptotic normality does not hold if $\lambda_t=\OO(1)$, for test functions in $\HH^{1/2}$. This is related to the phenomenon of variance saturation explained by Berry \cite{Ber1988}, which happens at the same transition of the parameter  $\lambda_t$. Motivated by this result, Johansson exhibited determinantal point processes satisfying the same phenomenon \cite{Joh2004}. 
Note that such a transition, where limiting normality fails, also appears for sums of random exponentials \cite{BenBogMol2005},
in particular for the Random Energy Model. The ultrametric structure for this model also appears for the counting measure of the $\zeta$ zeros \cite{Bou2010}. Interesting conjectures relating long-range dependence particle systems and 
extreme values of L-functions were developed in \cite{FyoHiaKea2012}.

The technique employed in the proof of both theorems involves an approximate version of the Weil explicit formula relating 
the zeros and primes  (Section 2). This uses the Helffer-Sj\"ostrand functional calculus, which enables us to consider non-analytic test functions, and Selberg's seminal formula for $\zeta'/\zeta$.

Finally, we want to mention that while finishing this manuscript we discovered,
in the draft  \cite{Rod2012}, 
a preliminary proof of Theorem \ref{thm:2} 
which seems not to require Selberg's formula (\ref{eqn:Selberg}).

\section{Approximate explicit formula}

In this section, we consider a function $f:\RR\to\RR$ of class $\mathscr{C}^2$, satisfying
(\ref{eqn:decrease})
as $x\to\pm \infty$. We aim at proving the following approximate version of the Weil explicit
formula, relying on
Selberg's smoothed expression for $\zeta'/\zeta$ and the Helffer-Sj\"ostrand functional calculus.
Remember that $\omega$ is a uniform random variable on $(1,2)$, $\gamma_t=\lambda_t(\gamma-\omega t)$,
and we use Selberg's smoothed  von Mangoldt function,
$$
\Lambda_u(n)=\left\{
\begin{array}{ll}
\Lambda(n)&{\ {\rm if}\ }1\leq n\leq u,\\
\Lambda(n)\frac{\log(u^2/n)}{\log n}&{\ {\rm if}\ }u\leq n\leq u^2,\\
0&{\ {\rm otherwise}\ }.
\end{array}
\right.
$$

\begin{proposition}\label{prop:appExpl}
Assume the Riemann hypothesis. For any $f\in \mathscr{C}^2$ satisfying the initial assumptions, and any $u=t^\alpha$, $\alpha>0$ fixed,
$$
\sum_\gamma f(\gamma_t)
-\frac{\log t}{2\pi\lambda_t}\int f
=\frac{1}{\lambda_t}\sum_{n\geq 1}\frac{\Lambda_u(n)}{\sqrt{n}}\left(
\hat f\left(\frac{\log n}{\lambda_t}\right)n^{\ii \omega t}
+\hat f\left(-\frac{\log n}{\lambda_t}\right)n^{-\ii \omega t}
\right)+E(\omega,t),
$$
where the chosen Fourier normalization is $\hat f(\xi)=\frac{1}{\pi}\int f(x)e^{-\ii\xi x}\rd x$ and the error term $E(\omega,t)$ is of type
\begin{equation}\label{eqn:error}
X(\omega,t)\frac{\lambda_t}{\log t}\OO\left(\|f\|_{1}+\|f'\|_{1}+\|f''\|_{1}+\frac{1}{t\log t}\|x\log x f\|_1+
\frac{1}{t\log t}\|x\log x f'\|_1
+
\frac{1}{t\log t}\|x\log x f''\|_1\right),
\end{equation}
where $\E(|X(\omega,t)|)$ is uniformly bounded and does not depend on $f$. 
\end{proposition}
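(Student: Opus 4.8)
I would prove this by combining the Riemann--von Mangoldt contour argument for $\sum_\gamma\phi(\gamma)$, where $\phi(\gamma):=f(\lambda_t(\gamma-\omega t))$, with two devices: the Helffer--Sj\"ostrand functional calculus, to cope with the fact that $\phi$ is merely $\mathscr{C}^2$ rather than analytic, and Selberg's smoothed identity (\ref{eqn:Selberg}) for $\zeta'/\zeta$, which supplies a \emph{finite} Dirichlet representation $-\sum_{n\le u^2}\Lambda_u(n)n^{-s}$ already near the critical line, so that one never has to push a contour out to $\Re s>1$ where the honest Dirichlet series converges. Concretely: (i) fix an order--one almost-analytic extension $\tilde\phi(x+\ii y)=\chi(y/\eta)\big(\phi(x)+\ii y\,\phi'(x)\big)$ with cut-off width $\eta\asymp 1/\log t$, the constraint being that $n^{|y|}=e^{|y|\log n}$ must stay bounded for the primes $n\le u^2=t^{2\alpha}$ occurring in Selberg's sum; (ii) write, under the Riemann hypothesis, $\sum_\gamma\phi(\gamma)=\sum_\rho\Phi(\rho)$ with $\Phi(s):=\tilde\phi\big(-\ii(s-\tfrac12)\big)$, and represent this by the residue theorem for non-holomorphic functions as the difference of two line integrals of $\Phi(s)\,\tfrac{\zeta'}{\zeta}(s)$ along $\Re s=\tfrac12\pm c$ (with $c\asymp 1/\log t$, $c<\eta$, and negligible horizontal pieces), plus a two-dimensional $\bar\partial$-integral; (iii) substitute Selberg's formula on $\Re s=\tfrac12+c$ directly, and on $\Re s=\tfrac12-c$ after the functional equation $\tfrac{\zeta'}{\zeta}(s)=[\text{archimedean}]-\tfrac{\zeta'}{\zeta}(1-s)$, so that there $n^{-s}$ is replaced by $n^{s-1}$ --- this asymmetry between the two sides is exactly what produces both $\hat f(\tfrac{\log n}{\lambda_t})$ and $\hat f(-\tfrac{\log n}{\lambda_t})$.

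\textbf{Extracting the main terms.} On each line the Dirichlet polynomial $\mp\sum_{n\le u^2}\Lambda_u(n)n^{\mp s}$ is met by the inverse Mellin transform of $\Phi$: one has $\tfrac1{2\pi\ii}\int_{(\frac12\pm c)}\Phi(s)\,n^{\mp s}\,ds=\tfrac12\,n^{\mp1/2}\,\hat\phi(\pm\log n)$ plus an error coming from the $c$-shift and from the $\ii y\,\phi'$-term in $\tilde\phi$ (both harmless since $c\log n=O(1)$ for $n\le u^2$), where $\hat\phi(\pm\log n)=\lambda_t^{-1}n^{\mp\ii\omega t}\hat f(\pm\log n/\lambda_t)$ after the change of variables $v=\lambda_t(\tau-\omega t)$. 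Summing over $n$ this is precisely $\tfrac1{2\lambda_t}\sum_n\tfrac{\Lambda_u(n)}{\sqrt n}\big(\hat f(\tfrac{\log n}{\lambda_t})n^{\ii\omega t}+\hat f(-\tfrac{\log n}{\lambda_t})n^{-\ii\omega t}\big)$. The archimedean factors $\tfrac12\tfrac{\Gamma'}{\Gamma}(s/2)$, $-\tfrac12\log\pi$ and (after the functional equation) $\tfrac1s$, $\tfrac1{s-1}$ produce, via the Stirling expansion $\tfrac{\Gamma'}{\Gamma}(\tfrac14+\tfrac{\ii\tau}{2})=\log\tfrac{\tau}{2\pi}+O(1/\tau)$ Taylor-expanded around $\tau\asymp\omega t\in(t,2t)$, the term $\tfrac{\log t}{2\pi\lambda_t}\int f$, up to errors that fit (\ref{eqn:error}); the pole of $\zeta$ at $s=1$ contributes only $\Phi(1)=f(\lambda_t(-\ii/2-\omega t))$ and the crossing of the $1/s$ pole only $\Phi(0)=f(\lambda_t(\ii/2-\omega t))$, both negligible because $f$ and its derivatives decay at $\pm\infty$ --- this is where the $\tfrac1{t\log t}\|x\log x\,f^{(j)}\|_1$ tails in (\ref{eqn:error}) enter.

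\textbf{Error terms and the main obstacle.} What remains is: the $\bar\partial\tilde\phi$-integrals from steps (ii)--(iii), controlled by $\bar\partial\tilde\phi=O(|y|\,|\phi''|)$ on the flat part of $\chi$ plus the $\chi'$-supported part, which after rescaling give $O(\tfrac{\lambda_t}{\log t})$ times $\|f\|_1+\|f'\|_1+\|f''\|_1$; the trivial-zeros and $s=1$-pole parts of Selberg's remainder, which are $O(t^{-1})$ on these contours because $|u^{1-s}|,|u^{-2k-s}|$ are tiny there; the archimedean tails; and --- the genuinely delicate piece --- the zero-sum part $-\tfrac1{\log u}\sum_\rho\tfrac{u^{2(\rho-s)}-u^{\rho-s}}{(\rho-s)^2}$ of Selberg's remainder, integrated against $\Phi$ along $\Re s=\tfrac12\pm c$. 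A crude bound is far too lossy, since this quantity is of size $\log t$ on the critical line; one must instead expand $\tfrac{u^{2w}-u^{w}}{w^2}$ as a superposition of exponentials $e^{w\xi}$ with $\xi\ge\log u$, so that $\sum_\rho\big(u^{2(\rho-s)}-u^{\rho-s}\big)$ becomes an integral of $\psi(e^\xi)-e^\xi$ weighted by $e^{-s\xi}$, and then exploit, under the Riemann hypothesis, the bound $\psi(e^\xi)-e^\xi=O(e^{\xi/2}\xi^2)$ together with the $\tfrac1{\log u}$ prefactor and $\xi\ge\log u\asymp\log t$. Carrying this out uniformly in $\omega\in(1,2)$, and packaging all the fluctuating quantities into the single random prefactor $X(\omega,t)$ --- essentially a short average of $\zeta'/\zeta$-type data at height $\omega t$, whose first moment is $O(1)$ by a standard mean-value estimate in $\omega$ --- is the heart of the proof and the step I expect to be the most involved.
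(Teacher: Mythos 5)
You have the right two ingredients (Helffer--Sj\"ostrand calculus for non-analytic test functions, Selberg's formula (\ref{eqn:Selberg}) in place of the divergent Dirichlet series), and several pieces of your plan are sound: the density term $\frac{\log t}{2\pi\lambda_t}\int f$ can indeed be read off the archimedean factor, the $C_u$, $D_u$ and trivial-zero contributions are negligible, and producing $\hat f(\pm\log n/\lambda_t)$ from the two sides of the critical line via the functional equation is a legitimate variant of the paper's device (which stays on the single side $\Re s>\frac12$ and takes real parts). The gap is in the step you yourself single out: the zero-sum term $B_u$ integrated along fixed vertical lines $\Re s=\frac12\pm c$. Writing $w=\rho-s$, one has $u^{w}-u^{2w}=-w\log u+\OO((w\log u)^2)$, so each zero within distance $\asymp 1/\log u$ of $s$ contributes $\asymp\frac{1}{\log u}\cdot\frac{\log u}{|w|}=\frac{1}{|w|}$ to $|B_u(s)|$; since there is typically a zero within $1/\log t$ of height $\omega t$, one gets $\E|B_u(\tfrac12+c+\ii\omega t)|\asymp\frac{\log t}{c\log u}\,u^{-c}$, which is $\asymp\log t$ for your choice $c\asymp1/\log t$, $u=t^{\alpha}$. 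Integrated against $|\Phi|$ along the line (total mass $\|f\|_1/\lambda_t$) this gives an error $\asymp\frac{\log t}{\lambda_t}\|f\|_1$, exceeding the target $\frac{\lambda_t}{\log t}\|f\|_1$ in (\ref{eqn:error}) by a factor $(\log t/\lambda_t)^2$. Your proposed remedy via $\psi(e^{\xi})-e^{\xi}=\OO(e^{\xi/2}\xi^2)$ addresses global cancellation in $\sum_\rho x^\rho$, but the obstruction is local --- the one or two zeros at distance $\asymp1/\log t$ from $s$ --- and is a zero-density statement in $\omega$, not a consequence of bounds on $\psi(x)-x$. Pushing $c$ further out to damp $B_u$ by $u^{-c}$ destroys the main term: the coefficient you extract is $n^{-c}(1+c\log n)\hat f(\log n/\lambda_t)$, and the factor is $1+\OO((c\log n)^2)$, which is not $1+\oo(1)$ uniformly for $n\le u^2=t^{2\alpha}$; your remark that ``$c\log n=\OO(1)$ is harmless'' is precisely where this is swept under the rug. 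There is no window for $c$ in which both estimates close with the bounds you give.

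The paper escapes this dilemma by never introducing a contour at a fixed distance from the critical line. It keeps the full two-dimensional Helffer--Sj\"ostrand integral, in which the $\bar\partial$-weight $\asymp y|f''(x)|$ multiplies $\frac{\zeta'}{\zeta}(\frac12+\frac{y}{\lambda_t}+\cdots)$ over all $y>0$: the factor $y$ exactly cancels the $1/y$ blow-up of the averaged zero density $\E\sum_\gamma((y/\lambda_t)^2+(\omega t-\gamma)^2)^{-1}\lesssim\lambda_t\log t/y$, and $u^{-y/\lambda_t}$ supplies an effective width $\lambda_t/\log u$ in $y$, so the $B_u$ error comes out as $\frac{\lambda_t\log t}{(\log u)^2}\asymp\frac{\lambda_t}{\log t}$ using only crude absolute-value bounds. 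The main term is then recovered not from a contour but from the exact identity (\ref{eqn:integration by parts}), which shows that the $A_u$ part of the full two-dimensional integral produces the coefficient $\hat f(\log n/\lambda_t)$ with no distortion, for every $n$ and every $u$. To keep your contour architecture you would have to either reinstate a $y$-weighted family of contours --- which is exactly what the Helffer--Sj\"ostrand representation already is --- or prove a genuinely nontrivial $L^1(\omega)$ bound for $B_u$ on a fixed line near the critical axis, which your sketch does not supply.
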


It is clear that if  $f$ is a fixed compactly supported $\mathscr{C}^2$ function, the error term converges in probability to $0$ as $t\to\infty$. However, in our application of Proposition \ref{prop:appExpl}, $f$ can depend on $t$.

Moreover, we state this approximate version of the explicit formula in a probabilistic setting for convenience,
as this is what is needed in the proof of Theorems \ref{thm:1} and \ref{thm:2}. One could also state a deterministic
version, for functions with compact support along the critical axis.

\begin{proof}
All of the integrals in $\rd x\rd y$ in this paper are on the domain $\mathcal{D}:=\{x\in\RR,y>0\}$. The following formula, from the Helffer-Sj\"ostrand functional calculus, will be useful for us: for any $q\in\RR$,
\begin{equation}\label{eqn:HS}
f(q)=\Re\left(\frac{1}{\pi}\iint_{\mathcal{D}}\frac{\ii y f''(x)\chi(y)+\ii (f(x)+\ii y f'(x))\chi'(y)}{q-(x+\ii y)}\rd x\rd y\right),
\end{equation}
where $\chi$ is a smooth cutoff function equal to $1$ on $[0,1/2]$, $0$ on $[1,\infty)$.
This is one of many possible formulas aiming originally at evaluating $\tr f(H)$
from resolvent estimates of $H$, for general self-adjoint operators $H$ and test function $f$  (see e.g.\cite{Dav1995}). We follow this idea here, the {\it resolvent estimate} being Selberg's expression for $\zeta'/\zeta$.

Let $\gamma_t=\lambda_t(\gamma-\omega t)$, and $N(t)$ be the number of $\gamma$'s in $[0,t]$ (counted with multiplicity). It is well-known (see e.g. \cite{Tit1986}) that, as $t\to\infty$,
\begin{equation}\label{eqn:Nt}
N(t)=\frac{t}{2\pi}\log t-\frac{1+\log(2\pi)}{2\pi} t+\OO(\log t).
\end{equation}
From (\ref{eqn:HS}), taking real parts, we obtain (here $z=x+\ii y$)
\begin{align}
\sum_{|\gamma|<M} f(\gamma_t)=&
-\label{term1}
\frac{1}{\pi}\iint_{\mathcal{D}} y f''(x)\chi(y)\sum_{|\gamma|<M}\im\left(
\frac{1}{\gamma_t-z}\right)\rd x\rd y\\
&-\label{term2}
\frac{1}{\pi}\iint_{\mathcal{D}} f(x)\chi'(y)\sum_{|\gamma|<M}\im\left(
\frac{1}{\gamma_t-z}\right)\rd x\rd y\\
&-
\frac{1}{\pi}\iint_{\mathcal{D}} y f'(x)\chi'(y)\sum_{|\gamma|<M}\re\left(
\frac{1}{\gamma_t-z}-\frac{1}{\lambda_t}\frac{\gamma}{\gamma^2+\frac{1}{4}}\right)\rd x\rd y\label{term3}
\end{align}
(it will soon be clear why we add the $\gamma/(\gamma^2+1/4)$ term, which makes no contribution in the integral).
We now prove that  by dominated convergence, the above three terms converge as $M\to\infty$.
First, note that $y\mapsto y\im((\gamma-(x+\ii y))^{-1})$ is increasing, so using (\ref{eqn:decrease}),
$$
(\ref{term1})
\leq
\int |f''(x)|\sum_{\gamma}\frac{1}{1+(\gamma_t-x)^2}\rd x
\leq
\int \sum_{\gamma}\frac{1}{1+x^2}\frac{1}{1+(\gamma_t-x)^2}\rd x
\leq
\sum_{\gamma}\frac{1}{1+\gamma_t^2}<\infty,
$$
where we used
\begin{equation}\label{eqn:convolution}
\int\frac{1}{1+(a-x)^2}\frac{1}{1+x^2}\rd x\leq\frac{1}{1+a^2},
\end{equation}
where all the above (and following) inequalities are up to universal constants. Moreover $\chi'$ is supported on [1/2,1],
still using (\ref{eqn:decrease}) and (\ref{eqn:convolution}) it is immediate that (\ref{term2}) converges as well.
Finally, concerning (\ref{term3}),  grouping for example $\gamma$ with $-\gamma$ in the sum,
we can bound it by
\begin{multline*}
\iint_{\mathcal{D}} y \frac{1}{1+x^2}|\chi'(y)|\sum_{0\leq \gamma\leq M}\left|\re\left(\frac{1}{\gamma_t-z}+
\frac{1}{(-\gamma)_t-z}\right)
\right|\rd x\rd y\\
\leq
\int \frac{1}{1+x^2}\sum_{0\leq \gamma\leq M}\left|\re\left(\frac{1}{\gamma_t-(x+\ii)}+
\frac{1}{(-\gamma)_t-(x+\ii)}\right)
\right|\rd x
\end{multline*}
and it is an integration exercise to prove that the contribution of each $\gamma$  in this integral
is $\OO(\gamma^{-3/2})$ for example.  Note that
from (\ref{eqn:decrease}) and (\ref{eqn:Nt}), $\sum_\gamma f(\gamma_t)$ is absolutely summable for each fixed $t$.
We therefore proved that
\begin{align}
\sum_{\gamma} f(\gamma_t)=&
-\label{tterm1}
\frac{1}{\pi}\iint_{\mathcal{D}} y f''(x)\chi(y)\sum_{\gamma}\im\left(
\frac{1}{\gamma_t-z}\right)\rd x\rd y\\
&-\label{tterm2}
\frac{1}{\pi}\iint_{\mathcal{D}} f(x)\chi'(y)\sum_{\gamma}\im\left(
\frac{1}{\gamma_t-z}\right)\rd x\rd y\\
&-
\frac{1}{\pi}\iint_{\mathcal{D}} y f'(x)\chi'(y)\sum_{\gamma}\re\left(
\frac{1}{\gamma_t-z}-\frac{1}{\lambda_t}\frac{\gamma}{\gamma^2+\frac{1}{4}}\right)\rd x\rd y\label{tterm3},
\end{align}
where all sums are absolutely convergent.
Now, the above sums can be written in terms of $\zeta'/\zeta$: it is known from
Hadamard's factorization formula that,  denoting $\rho$'s for the non-trivial $\zeta$-zeros, for any $s\not\in\{\rho\}$, we have
(see e.g. p 398 in \cite{MonVau2007})
$$
\frac{\zeta'}{\zeta}(s)=-\frac{1}{s-1}+\sum_\rho\left(\frac{1}{s-\rho}+\frac{1}{\rho}\right)-\frac{1}{2}\log\im(s)
+\OO(1).
$$
By taking real and imaginary parts, and identifying $s=1/2+\frac{y}{\lambda_t}+\ii \left(\omega t+\frac{x}{\lambda_t}\right)$, we get 
\begin{align*}
\sum_{\gamma}\im\left(
\frac{1}{\gamma_t-(x+\ii y)}\right)&=
\frac{1}{\lambda_t}\re\frac{\zeta'}{\zeta}\left(\frac{1}{2}+\frac{y}{\lambda_t}+\ii \left(\omega t+\frac{x}{\lambda_t}\right)\right)
+\frac{1}{2}\frac{\log t}{\lambda_t}
+\frac{1}{\lambda_t}\OO\left(\left|\log\left(\omega +\frac{x}{t\lambda_t}\right)\right|\right),
\\
\sum_{\gamma}\re\left(
\frac{1}{\gamma_t-(x+\ii y)}-\frac{1}{\lambda_t}\frac{\gamma}{\gamma^2+\frac{1}{4}}\right)
&=
\frac{1}{\lambda_t}\im\frac{\zeta'}{\zeta}\left(\frac{1}{2}+\frac{y}{\lambda_t}+\ii \left(\omega t+\frac{x}{\lambda_t}\right)\right)
+\OO\left(\frac{1}{\lambda_t}\right).
\end{align*}

Still relying on (\ref{eqn:HS}), using the fact that
$
\lim_{M\to\infty}\im\int_{-M}^M
\frac{\rd u}{u-{x+\ii y}}=\int\frac{\rd v}{x^2+1}=\pi,\
\lim_{M\to\infty}\re\int_{-M}^M
\frac{\rd u}{u-{x+\ii y}}=0,
$
we have
$$
\frac{\log t}{2\pi\lambda_t}\int f(u)\rd u=
-\frac{1}{\pi}\int y f''(x)\chi(y)\frac{1}{2}\frac{\log t}{\lambda_t}\rd x\rd y\\
-
\frac{1}{\pi}\int f(x)\chi'(y)\frac{1}{2}\frac{\log t}{\lambda_t}\rd x\rd y,
$$
so we obtained,
\begin{align}
\sum_{\gamma} f(\gamma_t)-\frac{\log t}{2\pi\lambda_t}\int f=&\notag
-\frac{1}{\pi\lambda_t}\iint_{\mathcal{D}} y f''(x)\chi(y)
\re\frac{\zeta'}{\zeta}\left(\frac{1}{2}+\frac{y}{\lambda_t}+\ii \left(\omega t+\frac{x}{\lambda_t}\right)\right)
\rd x\rd y\\
&\notag
-\frac{1}{\pi\lambda_t}\iint_{\mathcal{D}} f(x)\chi'(y)
\re\frac{\zeta'}{\zeta}\left(\frac{1}{2}+\frac{y}{\lambda_t}+\ii \left(\omega t+\frac{x}{\lambda_t}\right)\right)
\rd x\rd y\\
&
-\frac{1}{\pi\lambda_t}\iint_{\mathcal{D}} y f'(x)\chi'(y)
\im\frac{\zeta'}{\zeta}\left(\frac{1}{2}+\frac{y}{\lambda_t}+\ii \left(\omega t+\frac{x}{\lambda_t}\right)\right)
\rd x\rd y\label{eqn:fluct1}
+\OO\left(\frac{1}{\lambda_t}\right),
\end{align}
where the above $\OO(\lambda_t^{-1})$ is understood in the sense that its $L^1$ norm is bounded by $\lambda_t^{-1}$.
We now substitute $\frac{\zeta'}{\zeta}$, in the above expression, with its smooth approximation by Selberg: for any $u>0$
and $s\not\in\{\rho,1, -2\NN\}$,
\begin{equation}\label{eqn:Selberg}
\frac{\zeta'}{\zeta}(s)=A_u(s)+B_u(s)+C_u(s)+D_u(s)
\end{equation}
where
\begin{align*}
A_u(s)&=-\sum_{n\leq u^2}\frac{\Lambda_u(n)}{n^s},\\
B_u(s)&=
\frac{1}{\log u}\sum_{\rho}\frac{u^{\rho-s}-u^{2(\rho-s)}}{(\rho-s)^2},\\
C_u(s)&=
\frac{1}{\log u}\sum_{n\geq 1}\frac{u^{-2n-s}-u^{-2(2n+s)}}{(2n+s)^2},\\
D_u(s)&=
\frac{1}{\log u}\frac{u^{2(1-s)}-u^{1-s}}{(1-s)^2}.
\end{align*}
First, it is elementary that the contribution from the terms $D_u$ and $C_u$ in (\ref{eqn:fluct1}) is negligible.
For $D_u$, we bound by
$
\frac{1}{\log u}\int \frac{u}{1+(\omega t)^2}\frac{\rd x}{1+x^2}\leq
\frac{1}{\log u} \frac{u}{1+t^2}\leq \frac{c}{t},
$
under the constraint $1\ll u\leq t$ (by the end we will choose $u=t^{1/2}$).
 The term involving $C_u$ is also $\OO(t^{-1})$ easily.

The main errors involve $B_u$.
First, as $\chi'$ is supported on $(1/2,1)$, we have
\begin{multline}
\frac{1}{\lambda_t}\iint_{\mathcal{D}}(|f(x)\chi'(y)|+|y f'(x)\chi'(y)|)\left|B_u\left(\frac{1}{2}+\frac{y}{\lambda_t}+\ii \left(\omega t+\frac{x}{\lambda_t}\right)\right) \right|\rd x\rd y\\
\leq
\frac{1}{\lambda_t\log u}e^{-\frac{\log u}{2\lambda_t}}\int
(|f(x)|+|f'(x)|)\sum_{|\gamma|<4t+\frac{4|x|}{\lambda_t}}\E
\frac{1}{(1/\lambda_t)^2+(\omega t-\gamma+x/\lambda_t)^2}
\rd x\\
+
\frac{1}{\lambda_t\log u} e^{-\frac{\log u}{2\lambda_t}}\int
 (|f(x)|+|f'(x)|)\sum_{|\gamma|>4t+\frac{4|x|}{\lambda_t}}\E
\frac{1}{(1/\lambda_t)^2+(\omega t-\gamma+x/\lambda_t)^2}
\rd x
\label{eqn:farFromAxis}
\end{multline}
Using (\ref{eqn:Nt}), the first sum is at most
$$
\frac{|\{|\gamma|<4t+\frac{4|x|}{\lambda_t}|\}|}{t}
\int \frac{ \rd v}{(1/\lambda_t)^2+v^2}
\leq
(\log t+\frac{\vert x\vert\log \vert x\vert }{t})\lambda_t,
$$
and the second at most $\sum 1/\gamma^2<\infty$, so this error is of type (\ref{eqn:error}),
for  $u=t^\alpha$.
Finally, the error from $B_u$ in the expectation of the term (\ref{tterm1}),  which is closer to the critical axis, is bounded by
\begin{align*}
&\frac{1}{\lambda_t\log u}\iint_{\mathcal{D}}
y e^{-\frac{y}{\lambda_t}\log u}|f''(x)|\sum_{|\gamma|<4t+\frac{4|x|}{\lambda_t}}\E
\frac{1}{(y/\lambda_t)^2+(\omega t-\gamma+x/\lambda_t)^2}
\rd x\rd y\\
+&
\frac{1}{\lambda_t\log u}\iint_{\mathcal{D}}
y e^{-\frac{y}{\lambda_t}\log u} |f''(x)|\sum_{|\gamma|>4t+\frac{4|x|}{\lambda_t}}\E
\frac{1}{(y/\lambda_t)^2+(\omega t-\gamma+x/\lambda_t)^2}
\rd x\rd y.
\end{align*}
The first sum is at most
$$
\frac{|\{|\gamma|<2t+\frac{2|x|}{\lambda_t}\}|}{t}
\int \frac{1}{(y/\lambda_t)^2+u^2} \rd u
\leq
(\log t+\frac{\vert x\vert\log \vert x\vert}{t})\frac{\lambda_t}{y},
$$
and the second at most $\sum 1/\gamma^2<\infty$, so all together this error term is of type (\ref{eqn:error}).

Finally, the $A_u(s)$ term can be simplified observing, by successive integrations by parts
\footnote{
In detail,
\begin{align*}
& \frac{1}{\pi}\int f''(x)e^{-\ii\delta x}\rd x\int y\chi(y)e^{-\delta y}\rd y = \frac{1}{\pi}\int \ii\delta y\chi(y)e^{-\delta y}\rd y\int f'(x)e^{-\ii\delta x}\rd x\\
=& \frac{1}{\pi}\int (\ii\chi(y)+\ii y\chi'(y))e^{-\delta y}\rd y\int f'(x)e^{-\ii\delta x}\rd x.
\end{align*}
And notice that the $\ii y\chi'(y)$ term cancels, and the other term equals
$$
\frac{1}{\pi}\int \ii\chi(y)e^{-\delta y}\rd y\int f'(x)e^{-\ii\delta x}\rd x = -\frac{1}{\pi}\int \delta\chi(y)e^{-\delta y}\rd y\int f(x)e^{-\ii\delta x}\rd x.
$$
and a final integration by parts gives (\ref{eqn:integration by parts}).
}, that for any $\delta>0$ we have
\begin{equation}\label{eqn:integration by parts}
\frac{1}{\pi}\int (y f''(x)\chi(y)+ (f(x)-\ii y f'(x))\chi'(y))e^{-\ii\delta x}e^{-\delta y}\rd x\rd y
=
-\frac{1}{\pi}\int f(x)e^{-\ii\delta x}\rd x.
\end{equation}
This completes the proof of Proposition \ref{prop:appExpl}.
\end{proof}

\section{Strong Szeg{\H o} theorem}

We first prove that, in Proposition \ref{prop:appExpl},
the terms $n$ of type $p^k$, for $k\geq 2$, make no contribution.

\begin{lemma} \label{lem:largepowers}For $u=t^\alpha$, $\alpha\leq 1$, and a family of functions $(f_t)$ uniformly bounded in $L^1$,
the random variable
$$
\frac{1}{\lambda_t}\sum_{n=p^k,p\in\mathcal{P},k\geq 2}\frac{\Lambda_u(n)}{\sqrt{n}}\hat f_t\left(\frac{\log n}{\lambda_t}\right)n^{\ii \omega t}
$$
converges to 0 in $L^2$.
\end{lemma}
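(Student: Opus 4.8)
The plan is to bound the $L^2$ norm directly by expanding the square and taking the expectation over $\omega$, which is uniform on $(1,2)$. Write the random variable as $\frac{1}{\lambda_t}\sum_n a_n(t) n^{\ii\omega t}$ where $a_n(t)=\frac{\Lambda_u(n)}{\sqrt n}\hat f_t\left(\frac{\log n}{\lambda_t}\right)$ and the sum runs over prime powers $n=p^k$ with $k\geq 2$. Then
$$
\E\left|\frac{1}{\lambda_t}\sum_n a_n(t) n^{\ii\omega t}\right|^2
=\frac{1}{\lambda_t^2}\sum_{m,n} a_m(t)\overline{a_n(t)}\,\E\left((m/n)^{\ii\omega t}\right).
$$
Since $\E\left((m/n)^{\ii\omega t}\right)=\int_1^2 e^{\ii\omega t\log(m/n)}\rd\omega$, the diagonal terms $m=n$ contribute exactly $1$ and the off-diagonal terms contribute $\OO\left(\frac{1}{t|\log(m/n)|}\right)$. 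The first step is therefore to show the diagonal contribution $\frac{1}{\lambda_t^2}\sum_n |a_n(t)|^2$ tends to $0$; the second is to show the off-diagonal contribution is negligible.

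For the diagonal, I would use that $|\Lambda_u(n)|\leq \log n$ and that $\hat f_t$ is uniformly bounded since $(f_t)$ is uniformly bounded in $L^1$ (indeed $\|\hat f_t\|_\infty\leq \frac1\pi\|f_t\|_1\leq C$). Hence the diagonal sum is
$$
\frac{1}{\lambda_t^2}\sum_{p^k,\,k\geq 2,\,p^k\leq u^2}\frac{(\log p^k)^2}{p^k}\,\OO(1)
\leq\frac{C}{\lambda_t^2}\sum_{k\geq 2}\sum_{p}\frac{(k\log p)^2}{p^k}.
$$
The inner sum over $k\geq 2$ and $p$ converges (it is dominated by $\sum_p \frac{(\log p)^2}{p^2}$ up to constants, after summing the geometric-type series in $k$), so the whole expression is $\OO(\lambda_t^{-2})=\oo(1)$ since $\lambda_t\to\infty$. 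This is the crucial structural point: restricting to $k\geq 2$ makes the relevant Dirichlet series converge at $s=1$, gaining the square-summability that the $k=1$ terms lack.

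For the off-diagonal part, each term is bounded by $\frac{1}{\lambda_t^2}\cdot\frac{|a_m(t)||a_n(t)|}{t|\log(m/n)|}$, and one separates the ranges $|\log(m/n)|$ large (say $\geq 1$, trivially bounded, and the double sum over $m,n\leq u^2$ of $|a_m||a_n|$ is $\OO((\log u^2)^2)$ times the count, all divided by $t\lambda_t^2$, which is small for $u=t^\alpha$, $\alpha\le 1$) from $|\log(m/n)|$ small. For consecutive prime powers the gap $|\log(m/n)|$ cannot be too small — distinct integers $m\neq n$ in $[1,u^2]$ have $|\log(m/n)|\gtrsim 1/u^2$, so the worst case gives a factor $u^2/t\leq t$, but this is then multiplied by $\lambda_t^{-2}$ and by the number of pairs and the smallness of the $a_n$; a cleaner route is to note $\sum_{m\neq n}\frac{1}{|\log(m/n)|}$ over $m,n\leq N$ is $\OO(N^2\log N)$, giving total bound $\OO\!\left(\frac{u^4\log u}{t\lambda_t^2}\right)$, which is not obviously small. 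I expect this to be the main obstacle, and the fix is to not bound $|a_n|$ by $\log n$ crudely in the off-diagonal: instead use $|a_m(t)\overline{a_n(t)}|\leq \frac12(|a_m|^2+|a_n|^2)$ to reduce to $\frac{1}{t\lambda_t^2}\sum_n |a_n|^2\sum_{m\neq n}\frac{1}{|\log(m/n)|}$, and then use that $\sum_{m\le N, m\neq n}\frac{1}{|\log(m/n)|}=\OO(N)$ (since $|\log(m/n)|\gtrsim |m-n|/N$ for $m,n\le N$), so this becomes $\frac{u^2}{t\lambda_t^2}\sum_n|a_n|^2=\frac{u^2}{t}\cdot\OO(1)$, which is $\oo(1)$ for $\alpha<1/2$; for $\alpha$ up to $1$ one refines using that $a_n$ is supported where $\log n \le \lambda_t \cdot(\text{supp scale})$ or simply invokes that in the application $u=t^{1/2}$. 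I would therefore state the lemma's proof for $\alpha\le 1/2$ (sufficient for the eventual choice $u=t^{1/2}$), or carry the extra care needed to push to $\alpha\le 1$. Combining the vanishing diagonal and off-diagonal contributions yields convergence to $0$ in $L^2$, completing the proof.
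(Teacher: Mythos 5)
Your route — expand the square, average over $\omega$, and separate diagonal from off-diagonal — is genuinely different from the paper's, which dispatches $k\geq 3$ by trivial absolute summability and invokes the Montgomery--Vaughan inequality (\ref{eqn:MontgomeryVaughan}) for $k=2$. Your diagonal computation is correct and captures the same structural point the paper relies on: for $k\geq 2$ the Dirichlet series $\sum_{k\geq 2}\sum_p (k\log p)^2 p^{-k}$ converges, so the diagonal contribution is $\OO(\lambda_t^{-2})\to 0$.

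The off-diagonal estimate, however, has a genuine gap, which you half-sense. Two problems. First, $\sum_{m\leq N,\,m\neq n}1/|\log(m/n)|$ is $\OO(N\log N)$, not $\OO(N)$: the near-diagonal range $|m-n|\leq n$ alone contributes $\sum_{j\le n} n/j \asymp n\log n$. Second, and more seriously, after symmetrizing with $|a_m\overline{a_n}|\leq\tfrac12(|a_m|^2+|a_n|^2)$ and then bounding the inner $m$-sum uniformly in $n$, the best you get is $\OO\bigl(u^2\log u/(t\lambda_t^2)\bigr)$. At the value $u=t^{1/2}$ actually used in the proofs of Theorems \ref{thm:1} and \ref{thm:2}, this is $\OO(\log t/\lambda_t^2)$, which is \emph{not} $\oo(1)$ in general: the hypotheses only require $1\ll\lambda_t\ll\log t$, so $\lambda_t=\log\log t$ is allowed. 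Restricting the lemma to $\alpha\leq 1/2$ therefore does not save the argument at the endpoint, and the alternative suggestion of exploiting a support restriction on $\hat f_t$ is unavailable since the hypothesis is only uniform $L^1$ boundedness of $f_t$. What gets thrown away in the symmetrization-plus-$\max_n$ step is precisely the joint decay $|a_m||a_n|\lesssim (\log m)(\log n)/\sqrt{mn}$, which for $k\geq 2$ is summable in \emph{each} variable; keeping the product weight and splitting on whether $|\log(m/n)|$ is $\lesssim 1$ or $\gtrsim 1$ gives an off-diagonal contribution of order $(\log u)^{\OO(1)}/(t\lambda_t^2)\to 0$ for every $\alpha\leq 1$. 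The paper's use of Montgomery--Vaughan performs this bookkeeping automatically: the off-diagonal correction is $\sum_p |a_p|^2\,c/(t\delta_p)$ with $\delta_p\gtrsim 1/p$, and the $p^{-2}$ decay of $|a_p|^2$ absorbs the $p/t$ penalty, yielding $\OO\bigl((\log u)^2/(t\lambda_t^2)\bigr)$.
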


\begin{proof}
For the terms corresponding to $k\geq 3$, this is obvious by absolute  summability.
For $k=2$, we
 can use the Montgomery-Vaughan inequality \cite{MonVau1974}:
for any complex numbers $a_r$ and real numbers  $\lambda_r$,  and setting $\delta_r=\min_{s\neq r}|\lambda_r-\lambda_s|$,
\begin{equation}\label{eqn:MontgomeryVaughan}
\frac{\Vert \hat{f} \Vert_{\infty}^2}{t}\int_t^{2t}\left|\sum_r a_re^{\ii\lambda_r s}\right|^2\rd s\leq
\sum_r|a_r|^2\left(1+\frac{c}{t\delta_r}\right)
\end{equation}
for some universal $c>0$.
Consequently, in our situation, taking $\lambda_p=2\log p$, and bounding uniformly $\hat f$, we get
$$
\E\left|\frac{1}{\lambda_t}\sum_{p\in\mathcal{P}}\frac{\log p}{p}\hat f\left(\frac{\log p^2}{\lambda_t}\right)p^{2\ii \omega t}
\right|^2
\leq
\frac{1}{\lambda_t^2}\sum_{p\in\mathcal{P},p\leq t}\frac{(\log p)^2}{p^2}
\left(1+\frac{c p}{2 t}\right)\to 0,
$$
where we just used $|\log p_1-\log p_2|>2 p_1^{-1}$ for prime numbers $p_1<p_2$.
\end{proof}


Concerning the terms 
$n=p$ appearing in Proposition \ref{prop:appExpl}, the following lemma computes the asymptotics  of the diagonal terms from the second  moment
for a fixed function $f$.
This will be the asymptotics of the  variance.

\begin{lemma}\label{Second Moment} 
Let $b_{pt}=\lambda_t^{-1}\Lambda_u(p)/\sqrt{p}\,\hat f(\log p/\lambda_t)$.
Suppose $\xi\hat{f}(\xi)^2$ and $(\xi\hat{f}(\xi)^2)'$ have the asymptotic bound $O(\xi^{-1})$ as $\xi\rightarrow\pm\infty$. Then as $t\rightarrow\infty$, for $u=t^{1/2}$,
$$
\sum_{p\in\mathcal{P}} \vert b_{pt}\vert^2 = (1+o(1))\int_0^{(\log t)/(2\lambda_t)} \xi|\hat{f}(\xi)|^2\rd \xi + \OO\left(\int_{(\log t)/(2\lambda_t)}^{(\log t)/\lambda_t} \xi |\hat{f}(\xi)|^2\rd \xi\right).
$$
\end{lemma}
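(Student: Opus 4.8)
The plan is to split the sum at $p=u=t^{1/2}$ and replace each of the two resulting prime sums by an integral via partial summation. Since $\Lambda_u(p)=\log p$ for $p\le t^{1/2}$, $\Lambda_u(p)=\log(t/p)$ for $t^{1/2}<p\le t$, and $\Lambda_u(p)=0$ for $p>t$, we have
$$
\sum_{p\in\mathcal{P}}|b_{pt}|^2=\Sigma_1+\Sigma_2,\qquad
\Sigma_1=\frac1{\lambda_t^2}\sum_{p\le t^{1/2}}\frac{(\log p)^2}{p}\Bigl|\hat f\bigl(\tfrac{\log p}{\lambda_t}\bigr)\Bigr|^2,\quad
\Sigma_2=\frac1{\lambda_t^2}\sum_{t^{1/2}<p\le t}\frac{(\log(t/p))^2}{p}\Bigl|\hat f\bigl(\tfrac{\log p}{\lambda_t}\bigr)\Bigr|^2.
$$
Throughout I write $\Psi(\xi)=\xi|\hat f(\xi)|^2$ and $\phi(v)=v|\hat f(v/\lambda_t)|^2=\lambda_t\,\Psi(v/\lambda_t)$. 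By hypothesis $\Psi(\xi)=\OO(\xi^{-1})$ and $\Psi'(\xi)=\OO(\xi^{-1})$ as $\xi\to\infty$, while (\ref{eqn:decrease}) makes $f$ and $xf$ integrable, so $\hat f,\hat f'\in L^\infty$; hence $\Psi$ is bounded on $[0,\infty)$, $|\phi(v)|=\OO(\lambda_t)$, and $|\phi'(v)|\le|\hat f(v/\lambda_t)|^2+\tfrac{2v}{\lambda_t}|\hat f(v/\lambda_t)|\,|\hat f'(v/\lambda_t)|=\OO(1+v/\lambda_t)$.

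For $\Sigma_1$, I would apply Riemann--Stieltjes integration by parts to $\lambda_t^{-2}\int_{2^-}^{t^{1/2}}\phi(\log x)\,\rd S(x)$, where $S(x)=\sum_{p\le x}\frac{\log p}{p}$, using the prime number theorem in the quantitative form $S(x)=\log x+M+r(x)$ with $M$ a constant and $r(x)$ of rapid decay (e.g. $r(x)=\OO(e^{-c\sqrt{\log x}})$). The contribution of the leading term $\log x$ is, after a further integration by parts and the substitution $\xi=v/\lambda_t$,
$$
\frac1{\lambda_t^2}\int_0^{(\log t)/2}\phi(v)\,\rd v+\OO(\lambda_t^{-2})=\int_0^{(\log t)/(2\lambda_t)}\xi|\hat f(\xi)|^2\,\rd\xi+\OO(\lambda_t^{-2}),
$$
the second $\OO(\lambda_t^{-2})$ coming from cutting the integral off below $\log 2$. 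The contribution of $M+r(x)$ reduces to boundary terms of size $\lambda_t^{-2}\,\OO(|\phi((\log t)/2)|+|\phi(\log 2)|)=\OO(\lambda_t^{-1})$, together with $\lambda_t^{-2}\int_{\log 2}^{(\log t)/2}|r(e^v)|\,|\phi'(v)|\,\rd v=\OO(\lambda_t^{-2})$, the last bound because $\int_0^\infty|r(e^v)|(1+v)\,\rd v<\infty$ by the rapid decay of $r$. Thus $\Sigma_1=\int_0^{(\log t)/(2\lambda_t)}\xi|\hat f(\xi)|^2\,\rd\xi+\OO(\lambda_t^{-1})$.

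For $\Sigma_2$, the key observation is that $\log(t/p)\le\log p$ on the range $t^{1/2}<p\le t$, so $0\le\Sigma_2\le\lambda_t^{-2}\sum_{t^{1/2}<p\le t}\frac{(\log p)^2}{p}|\hat f(\log p/\lambda_t)|^2$; running the same partial summation over the interval $[t^{1/2},t]$ identifies this upper bound with $\int_{(\log t)/(2\lambda_t)}^{(\log t)/\lambda_t}\xi|\hat f(\xi)|^2\,\rd\xi+\OO(\lambda_t^{-1})$. Hence $\Sigma_2=\OO\bigl(\int_{(\log t)/(2\lambda_t)}^{(\log t)/\lambda_t}\xi|\hat f(\xi)|^2\,\rd\xi\bigr)+\OO(\lambda_t^{-1})$, and adding,
$$
\sum_{p\in\mathcal{P}}|b_{pt}|^2=\int_0^{(\log t)/(2\lambda_t)}\xi|\hat f(\xi)|^2\,\rd\xi+\OO\Bigl(\int_{(\log t)/(2\lambda_t)}^{(\log t)/\lambda_t}\xi|\hat f(\xi)|^2\,\rd\xi\Bigr)+\OO(\lambda_t^{-1}).
$$
Since $\lambda_t\ll\log t$, the first integral tends as $t\to\infty$ to $\int_0^\infty\xi|\hat f(\xi)|^2\,\rd\xi=\tfrac12\|f\|_{\HH^{1/2}}^2$, a strictly positive constant unless $f\equiv 0$ (in which case all quantities above vanish); so $\OO(\lambda_t^{-1})=\oo(1)$ gets absorbed into a $(1+\oo(1))$ factor of the first integral, which is the claim.

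The only genuinely delicate point is the strength of the prime-counting estimate: the elementary Mertens bound $S(x)=\log x+\OO(1)$ does not suffice, since integrating its $\OO(1)$ error against $\phi'$ — whose $L^1$-norm over $[0,(\log t)/2]$ is of order at least $\log t$ — only yields a bound $\OO(\lambda_t^{-2}\log t)$ on the error, which need not vanish when $\lambda_t$ grows slowly. One really needs the prime number theorem with a classical error term, so that the non-constant part of the remainder is integrable after the change of variable $x=e^v$ and the constant $M$ survives only through boundary terms. Everything else is routine partial summation and the change of variables $\xi=\log p/\lambda_t$.
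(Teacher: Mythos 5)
Your proof is correct and reaches the stated conclusion, but by a genuinely different route from the paper's. The paper compares $\sum_{p}|b_{pt}|^2$ to the target integral by a two-stage discrete argument: first a Riemann-sum estimate (via the mean value theorem) bounding $\bigl|\int_0^{(\log t)/(2\lambda_t)}\xi|\hat f|^2\,\rd\xi - \lambda_t^{-2}\sum_k q_k\Delta_k|\hat f(q_k/\lambda_t)|^2\bigr|$, and then Abel summation over the prime index $k$ to control the substitution $q_k/p_k\leftrightarrow\Delta_k\leftrightarrow k^{-1}$, invoking the prime number theorem in the form $q_k-\log k=\OO(\log\log k)$ together with a nontrivial prime-gap bound $p_{k+1}-p_k<p_k^{\theta}$, $\theta<1$. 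You instead perform a single Riemann--Stieltjes partial summation against $S(x)=\sum_{p\le x}(\log p)/p$ and feed in the PNT-strength expansion $S(x)=\log x+M+\OO(e^{-c\sqrt{\log x}})$, so that the main term produces the integral directly and the remainder, being in $L^1$ after the change of variables $x=e^v$, contributes only boundary terms and an $\OO(\lambda_t^{-2})$ error. Both methods rely on prime-number input of comparable strength, but your organization is more classical and somewhat shorter; you also make explicit the genuinely delicate point that Mertens' $\OO(1)$ error does not suffice because $\phi'$ is not in $L^1$ uniformly in $t$, which the paper handles more implicitly through the prime-gap and summation-by-parts machinery. The treatment of the range $t^{1/2}<p\le t$ via $\Lambda_{t^{1/2}}(p)=\log(t/p)\le\log p$ matches the paper's final remark. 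One point you gloss over but which is harmless: your $\OO(\lambda_t^{-1})$ boundary bound is cruder than necessary (the large-$v$ boundary terms in the $\log x$ part cancel exactly, and $\phi(\log 2)=\OO(1)$, so the true order is $\OO(\lambda_t^{-2})$); since $\lambda_t\to\infty$ is assumed throughout the paper, this slack does not affect the conclusion.
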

\begin{proof} This lemma relies on a simple  asymptotic estimate based on the prime number theorem.
Let $p_k$ denote the $k$th prime, $q_k$ denote $\log p_k$, with $q_0=0$ by convention, and
$\Delta_k=q_k-q_{k-1}$. First consider the sum over $1\leq p \leq t^{1/2}$. By the mean value theorem,
$$
\left| \int_{q_{k-1}/\lambda_t}^{q_k/\lambda_t} \xi |\hat{f}(\xi)|^2\rd \xi - \frac{\Delta_k}{\lambda_t}\frac{q_k}{\lambda_t}|\hat{f}\left(\frac{q_k}{\lambda_t}\right)|^2 \right| \leq \text{Var}(\xi|\hat{f}(\xi)|^21_{[q_{k-1}/\lambda_t,q_k/\lambda_t]}(\xi))\frac{\Delta_k}{\lambda_t}.
$$
which implies
$$
\left| \int_{0}^{(\log t)/(2\lambda_t)}\xi |\hat{f}(\xi)|^2 \rd \xi - \frac{1}{\lambda_t^2}\sum_{p_k<t^{1/2}} q_k \Delta_k |\hat{f}\left( \frac{q_k}{\lambda_t} \right)|^2  \right| \leq \sum_k \text{Var}(\xi|\hat{f}(\xi)|^21_{[q_{k-1}/\lambda_t,q_k/\lambda_t]}(\xi))\frac{\Delta_k}{\lambda_t}.
$$
Since the derivative of $w|\hat{f}(w)|^2$ is bounded by a constant $M$, then the right hand side is bounded by $\sum_k M\Delta_k^2/\lambda_t^2$, which converges to $0$.

Moreover, using summation by parts, and  letting $\pi$ denote the usual prime-counting function,
\begin{multline*}
\frac{1}{\lambda_t}\sum_{k=1}^{\pi(t^{1/2})} \frac{q_k}{\lambda_t} |\hat{f}\left( \frac{q_k}{\lambda_t} \right)|^2(\Delta_k - k^{-1}) 
=\frac{1}{\lambda_t}\frac{q_{\pi(t^{1/2})}}{\lambda_t}|\hat{f}\left(\frac{q_{\pi(t^{1/2})}}{\lambda_t}\right)|^2(q_{\pi(t^{1/2})}-\log \pi(t^{1/2})+\OO(1))\\
-\frac{1}{\lambda_t}\sum_{k=1}^{\pi(t^{1/2})}(q_k-\log k)\left[ \frac{q_{k+1}}{\lambda_t}|\hat{f}\left( \frac{q_{k+1}}{\lambda_t}\right)|^2 - \frac{q_k}{\lambda_t}\hat{f}|\left(\frac{q_k}{\lambda_t} \right)|^2  \right]
\end{multline*}
Using the prime number theorem and $\xi\hat{f}(\xi)^2=O(\xi^{-1})$, the first term is bounded above by 
$$
c\,\frac{q_{\pi(t^{1/2})}-\log\pi(t^{1/2})}{q_{\pi(t^{1/2})}}+\oo(1)=c\,\frac{\log\log\pi(t^{1/2})}{\log \pi(t^{1/2})},
$$
which converges to $0$. Now look at the second term. Using $(\xi\hat{f}(\xi)^2)'=\OO(\xi^{-1})$, the term in brackets can be bounded, so there is the upper bound 
$$
c\,\frac{1}{\lambda_t}\sum_{k=1}^{\pi(t^{1/2})}\log\log k \frac{\lambda_t}{q_k}\frac{ \Delta_{k+1}}{\lambda_t}.
$$
Using the well-known result on prime gaps, $p_{k+1}-p_k<p_k^{\theta}$ for sufficiently large $k$ and for some $\theta<1$,
$$
q_{k+1} < q_k + \log(1+p_k^{\theta-1})<q_k+4p_k^{\theta-1}<q_k+8k^{\theta-1}.
$$
Thus the upper bound
$$
c\,\frac{1}{\lambda_t}\sum_{k=1}^{\pi(t^{1/2})}\frac{\log\log k}{k^{2-\theta}\log k},
$$
holds, which also converges to $0$. 

The sum over $t^{1/2}\leq p\leq t$ follows from a similar argument and the fact that $\Lambda_{t^{1/2}}(p)=\log t - \log p \leq \log p$.
\end{proof}

Our proof of Theorem \ref{thm:1}  and Theorem \ref{thm:2} relies on a mollification $f_\e$ of $f$  in order to apply the approximate explicit formula, Proposition \ref{prop:appExpl},  to the following result from \cite{Bou2010} (using an idea from \cite{Sou2009}).  

\begin{proposition}\label{prop:CLT}
Let  $a_{pt}$ ($p\in\mathcal{P},t\in\RR^+$) be given complex numbers, such that $\sup_{p}|a_{pt}|\to 0$,
$\sum_p |a_{pt}|^2\to\sigma^2$ as $t\to\infty$.
Assume the existence of some $(m_t)$ with $\log m_t/\log t\to 0$ and
\begin{equation}\label{eqn:TailCondition}
\sum_{p>m_t} |a_{pt}|^2 \left(1+\frac{p}{t}\right)\underset{t\to\infty}{\longrightarrow}0.
\end{equation}
Then, if $\omega$ is a uniform random variable on $(1,2)$,
$$\sum_{p\in\mathcal{P}} a_{pt} p^{-\ii \omega t}\overset{{\rm(weakly)}}{\longrightarrow} \sigma \mathcal{N}$$
as $t\to\infty$, $\mathcal{N}$ being a standard complex normal variable.
\end{proposition}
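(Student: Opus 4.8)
The plan is to prove this central limit theorem by the method of moments, after first truncating the prime sum at $m_t$. We may assume $\sigma>0$, the case $\sigma=0$ following at once from the $L^2$ estimate of the first step below. Write $X_t=\sum_p a_{pt}\,p^{-\ii\omega t}=X_t^{\flat}+X_t^{\sharp}$ with $X_t^{\flat}=\sum_{p\leq m_t}a_{pt}\,p^{-\ii\omega t}$ and $X_t^{\sharp}=\sum_{p> m_t}a_{pt}\,p^{-\ii\omega t}$. The first step disposes of the tail: exactly as in the proof of Lemma~\ref{lem:largepowers}, the Montgomery--Vaughan inequality (\ref{eqn:MontgomeryVaughan}) applied with frequencies $\log p$ --- whose consecutive gaps satisfy $\delta_p\geq c/p$, since consecutive primes differ by at least $2$ --- gives
$$
\E\bigl[\,|X_t^{\sharp}|^2\,\bigr]=\frac{1}{t}\int_t^{2t}\Bigl|\sum_{p>m_t}a_{pt}\,e^{-\ii s\log p}\Bigr|^2\rd s\ \lesssim\ \sum_{p>m_t}|a_{pt}|^2\Bigl(1+\frac{p}{t}\Bigr)\ \longrightarrow\ 0
$$
as $t\to\infty$, by hypothesis (\ref{eqn:TailCondition}). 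Hence $X_t^{\sharp}\to 0$ in $L^2$, so by Slutsky's theorem it is enough to prove $X_t^{\flat}\to\sigma\mathcal{N}$ in distribution; note in passing $\sum_{p\leq m_t}|a_{pt}|^2\to\sigma^2$ and $\sup_{p\leq m_t}|a_{pt}|\to 0$.

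The core step is to show that for all integers $a,b\geq 0$,
$$
\E\bigl[(X_t^{\flat})^a\,\overline{(X_t^{\flat})}^{\,b}\bigr]\ \longrightarrow\ \delta_{a,b}\,a!\,\sigma^{2a}=\E\bigl[(\sigma\mathcal{N})^a\,\overline{(\sigma\mathcal{N})}^{\,b}\bigr].
$$
Expanding the left-hand side produces a sum over tuples $(p_1,\dots,p_a)$ and $(q_1,\dots,q_b)$ of primes $\leq m_t$, the term indexed by such a pair carrying the factor $\E\bigl[(N/M)^{-\ii\omega t}\bigr]$ with $N=\prod_i p_i$ and $M=\prod_j q_j$. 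Since $\omega$ is uniform on $(1,2)$, this factor is $\int_1^2 e^{-\ii s t\log(N/M)}\rd s$, which equals $1$ when $N=M$ and is $\OO\bigl(1/(t\,|\log(N/M)|)\bigr)=\OO(m_t^{a+b}/t)$ when $N\neq M$, using $|\log(N/M)|\geq 1/(2\max(N,M))$ and $\max(N,M)\leq m_t^{a+b}$. Hence the off-diagonal part $N\neq M$ is bounded by $\OO(m_t^{a+b}/t)\cdot\bigl(\sum_{p\leq m_t}|a_{pt}|\bigr)^{a+b}=\OO(m_t^{2(a+b)}/t)=t^{-1+\oo(1)}\to 0$, which is exactly where the hypothesis $\log m_t/\log t\to0$ enters. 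On the diagonal $N=M$, unique factorization forces the multisets $\{p_i\}$ and $\{q_j\}$ to coincide, hence $a=b$ and $(q_j)$ is a permutation of $(p_i)$; the tuples with pairwise distinct entries then contribute $a!\bigl(\sum_{p\leq m_t}|a_{pt}|^2\bigr)^a+\oo(1)\to a!\,\sigma^{2a}$, while the tuples with a repeated entry contribute $\OO\bigl(\sup_p|a_{pt}|^2\,(\sum_p|a_{pt}|^2)^{a-1}\bigr)\to0$, the condition $\sup_p|a_{pt}|\to0$ being precisely what annihilates these coincidence terms.

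To conclude, write $X_t^{\flat}=U_t+\ii V_t$; the mixed moments $\E[(X_t^{\flat})^a\,\overline{(X_t^{\flat})}^{\,b}]$ are real linear combinations of the joint moments $\E[U_t^j V_t^k]$ and vice versa, so the convergence just established forces every $\E[U_t^j V_t^k]$ to converge to the corresponding moment of the centered Gaussian vector with covariance $\frac{\sigma^2}{2}\Id$. That Gaussian law is moment-determinate (its moments plainly satisfy Carleman's condition), so $(U_t,V_t)$ converges weakly to it, i.e.\ $X_t^{\flat}\to\sigma\mathcal{N}$; combined with the first step this proves the proposition. The main obstacle is the off-diagonal estimate in the moment expansion: one has to check that the combinatorial count of off-diagonal tuples, of size roughly $\pi(m_t)^{a+b}$, is outweighed by the arithmetic cancellation of size $m_t^{a+b}/t$, and the constraint $\log m_t/\log t\to0$ is precisely calibrated to keep all the relevant products of primes of size $t^{\oo(1)}$ --- just as the specific shape of (\ref{eqn:TailCondition}) is calibrated to the Montgomery--Vaughan bound used to discard the tail.
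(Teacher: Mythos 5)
The paper does not prove this proposition internally: it is quoted from Bourgade's \emph{Mesoscopic fluctuations of the zeta zeros} (\cite{Bou2010}), which in turn builds on Soundararajan's idea (\cite{Sou2009}) of using the Montgomery--Vaughan inequality to discard the tail of the prime sum. Your proof is correct and takes essentially the same route as that source: the truncation at $m_t$ with the Montgomery--Vaughan bound $\E|X_t^{\sharp}|^2\lesssim\sum_{p>m_t}|a_{pt}|^2(1+p/t)$ is precisely how (\ref{eqn:TailCondition}) is meant to be used, and the method of moments for $X_t^{\flat}$ --- with $\E[(N/M)^{-\ii\omega t}]$ diagonalizing up to $O(m_t^{O(1)}/t)$ because $m_t=t^{\oo(1)}$, unique factorization matching the multisets, and $\sup_p|a_{pt}|\to 0$ killing the coincidence terms --- is exactly the mechanism in \cite{Bou2010}. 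Your handling of the $\sigma=0$ case, the reduction from complex mixed moments to joint real moments, and the appeal to Carleman are all fine, and your off-diagonal count (crude bound $\sum_{p\le m_t}|a_{pt}|\le m_t$, giving $O(m_t^{2(a+b)}/t)=t^{-1+\oo(1)}$) works; one could sharpen it to $O(m_t^{3(a+b)/2}/t)$ via Cauchy--Schwarz, but that is immaterial here.
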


\begin{proof}[Proof of Theorem \ref{thm:2}]
Let $\phi_\e(x)=\frac{1}{\e}\phi\left(\frac{x}{\e}\right)$ be  a bump function, and
$f_\e=f*\phi_\e$. Moreover, remember that we defined $\sigma_t^2=\int_{-(\log t)/\lambda_t}^{(\log t)/\lambda_t}|\xi||\hat f(\xi)|^2$.
We know that $\sigma_t\to \infty$ as $t\to\infty$. We will choose $\e=\e_t$ by the end of this proof, and use $u=t^{1/2}$.

{\it First step}. The difference
$\sigma_t^{-1}\sum_\gamma (f_\e(\gamma_t)-f(\gamma_t))$ converges to 0 in probability
if $\e\ll\frac{\lambda_t}{\log t}\sigma_t$. Indeed
\begin{multline*}
\E\left|f_\e(\gamma_t)-f(\gamma_t)\right|\leq
\e^{-1}\E\int |f(\gamma_t-y)-f(\gamma_t)|\phi(y/\e)\rd y
\leq  c\e^{-1}\int_1^2\rd\omega\int_0^\e\rd y\left|\int_{\gamma_t-y}^{\gamma_t}|\rd f(u)|\right|\\
\leq c \int_1^2\rd\omega\int_{\gamma_t-\e}^{\gamma_t}|\rd f(u)|
\leq c \int|\rd f(u)|\int_{-\frac{u+\e}{t\lambda_t}+\frac{\gamma}{t}\leq\omega\leq -\frac{u}{t\lambda_t}+\frac{\gamma}{t},|\gamma|\leq2t+\frac{|u|}{\lambda_t}}\rd\omega\\
\leq c\frac{\e}{t\lambda_t}    \int \1_{|\gamma|\leq2t+\frac{|u|}{\lambda_t}}|\rd f(u)|.
\end{multline*}
Hence, using (\ref{eqn:Nt}), we get that
$
\sum_\gamma \E\left|f_\e(\gamma_t)-f(\gamma_t)\right|
\leq\e \frac{\log t}{\lambda_t}\int(1+|u\log u|)|\rd f(u)|
$, and this last integral is bounded from the hypothesis (\ref{eqn:cond2}).

{\it Second step.} Let $Y_\e=\frac{1}{\sigma_t\lambda_t}\sum_{p\in\mathcal{P}}\frac{\Lambda_u(p)}{\sqrt{p}}\hat f_\e\left(\frac{\log p}{\lambda_t}\right)p^{\ii \omega t}$ and
$Y=\frac{1}{\sigma_t\lambda_t}\sum_{p\in\mathcal{P}}\frac{\Lambda_u(p)}{\sqrt{p}}\hat f\left(\frac{\log p}{\lambda_t}\right)p^{\ii \omega t}$.
Then  we have  $\|Y_\e-Y\|_{L^2}=\OO\left(\e\frac{\log t}{\lambda_t}\right)$ as
$t\to\infty$.
Indeed, we can bound
$\|Y_\e-Y\|^2_{L^2}$ by the diagonal terms in the expansion  because of Montgomery-Vaughan inequality,
stated in (\ref{eqn:MontgomeryVaughan}).
In our case, taking $\lambda_p=\log p$, $a_p=\frac{1}{\sigma_t\lambda_t}\frac{\Lambda_u(p)}{\sqrt{p}}\left(\hat f_\e\left(\frac{\log p}{\lambda_t}\right)-\hat f\left(\frac{\log p}{\lambda_t}\right)\right)$ using that $|\hat f_\e(u)-\hat f(u)|\leq c\, u\e|\hat f(u)|$, we get
$$
\|Y_\e-Y\|^2_{L^2}\leq c \left(\e\frac{\log t}{\lambda_t}\right)^2\frac{1}{\sigma_t^2}\sum_p
\left|\frac{1}{\lambda_t}\frac{\Lambda_u(p)}{\sqrt{p}}\hat f\left(\frac{\log p}{\lambda_t}\right)\right|^2
$$
and this last sum is asymptotically equivalent to $\sigma_t^2$, by Lemma \ref{Second Moment}.

{\it Third step.} We can easily find some  $m_t$ so that $\log m_t=\oo(\log t)$ and the tail condition $(\ref{eqn:TailCondition})$
is satisfied, for
$$a_{pt}=\frac{1}{\sigma_t\lambda_t}\frac{\Lambda_u(p)}{\sqrt{p}}\hat f\left(\frac{\log p}{\lambda_t}\right).$$
Indeed, as $f$ has bounded variation, $\hat f(x)=\OO(x^{-1})$, so
$$
\sum_{m_t<p<t}|a_{pt}|^2\leq\frac{1}{\sigma_t^2}\sum_{m_t<p<t} \frac{1}{p}\sim \frac{1}{\sigma_t^2}
(\log\log t-\log\log m_t).
$$
A possible choice is  $m_t=\exp(\log t/\sigma_t)$.

{\it Fourth step.} The error term (\ref{eqn:error}) in the approximate explicit formula for $f_\e$ can be controlled in the following way.
As $f$ has bounded variation, and $f''=\phi_\e''*f$,  it is a standard argument that
\begin{multline*}
\int|f_\e''(u)|\rd u=
\e^{-3}\int\left|\int\phi''\left(\frac{x}{\e}\right)(f(u-x)-f(u))\rd x\right|\rd u\\
\leq \e^{-3}\iint\left|\int_{u-x}^u|\rd f(v)|\right|\rd v  \1_{|x|<\e}\rd x   \rd u
\leq
\e^{-3}\int|\rd f(v)|\iint \1_{|x|<\e,v\in[u-x,u]}\rd x\rd u
\leq \e^{-1}\int|\rd f(v)|,
\end{multline*}
so the error term related to $f_\e''$  in Proposition \ref{prop:appExpl} is of order $\frac{\lambda_t}{\log t}\e^{-1}$.
All of the other error terms can be bounded in the same way, and have order at most $\frac{\lambda_t}{\log t}\e^{-1}$
as well.

{\it Conclusion.} From the previous steps, the conclusion of Theorem \ref{thm:2} holds if we can find some
$\e_t$ such that
$$
\frac{\lambda_t}{\sigma_t\log t}\ll\e_t\ll\frac{\lambda_t}{\log t}\sigma_t,
$$
which obviously holds for $\e_t=\lambda_t/\log t$.
Indeed, using the First and Second steps, to conclude we then just need 
\begin{align}
Y&\overset{{\rm(weakly)}}{\longrightarrow}\mathcal{N}\label{laststep1},\\
\frac{1}{\sigma_t}\sum_\gamma f_\e(\gamma_t)- Y_\e&\overset{{\rm(weakly)}}{\longrightarrow}0,\label{laststep2}
\end{align}
where $\mathcal{N}$ is a standard complex Gaussian random variable. The convergence (\ref{laststep1}) 
is a consequence of the Third step and Lemma \ref{Second Moment}, to apply Proposition \ref{prop:CLT}.
The convergence (\ref{laststep2}) holds thanks to Proposition \ref{prop:appExpl}, the Fourth step and Lemma \ref{lem:largepowers}.
\end{proof}

\begin{proof}[Proof of Theorem \ref{thm:1}]
We closely follow the proof of Theorem \ref{thm:2}, except that now all of the errors due to the mollification need to
vanish without normalization. This is possible  because of the extra regularity assumptions (\ref{eqn:cond1}) we chose to  assume in Theorem
\ref{thm:1}.

The error from the first and second steps are controlled exactly in the same way, and they
will be negligible if $\e\ll\frac{\lambda_t}{\log t}$. The error from the third step vanishes
if $\frac{\lambda_t}{\log t}\|f_\e''\|_{L^1}\to 0$ (for the $f_\e''$ term in  (\ref{eqn:error}), for example). As in the proof of Theorem \ref{thm:2},
$\|f_\e''\|_{L^1}$ can be bounded by the total variation of $f'$, so this error goes to 0  anyway.
\end{proof}

\begin{bibdiv}

 \begin{biblist}

\bib{BenBogMol2005}{article}{
   author={Ben Arous, G.},
   author={Bogachev, L.},
   author={Molchanov, S.},
   title={Limit theorems for sums of random exponentials},
   journal={Probab. Theory Related Fields},
   volume={132},
   date={2005},
   number={4},
   pages={579--612}
}

\bib{Ber1988}{article}{
   author={Berry, M. V.},
   title={Semiclassical formula for the number variance of the Riemann
   zeros},
   journal={Nonlinearity},
   volume={1},
   date={1988},
   number={3},
   pages={399--407},
}

\bib{BogKea1996}{article}{
   author={Bogomolny, E.},
   author={Keating, J.},
   title={Random matrix theory and the Riemann zeros II: $n$-point correlations},
   journal={ Nonlinearity},
   volume={9},
   date={1996}
   pages={911--935}
}

\bib{BorOko2000}{article}{
   author={Borodin, A.},
   author={Okounkov, A.},
   title={A Fredholm determinant formula for Toeplitz determinants},
   journal={Integral Equations Operator Theory},
   volume={37},
   date={2000},
   number={4},
   pages={386--396},
}

\bib{Bou2010}{article}{
   author={P. Bourgade},
   title={Mesoscopic fluctuations of the zeta zeros},
   journal={Probab. Theory Related Fields},
   volume={148},
   date={2010},
   number={3-4},
   pages={479--500}
}

\bib{CosLeb1995}{article}{
   author={Costin, O.},
   author={Lebowitz, J.},
   title={Gaussian fluctuations in random matrices},
   journal={Phys. Rev. Lett.},
   volume={75},
   date={1995},
   number={1},
   pages={69--72}
}

\bib{Dav1995}{article}{
   author={Davies, E. B.},
   title={The functional calculus},
   journal={J. London Math. Soc. (2)},
   volume={52},
   date={1995},
   number={1},
   pages={166--176}}

\bib{Dei1999}{article}{
   author={Deift, P.},
   title={Integrable operators},
   conference={
      title={Differential operators and spectral theory},
   },
   book={
      series={Amer. Math. Soc. Transl. Ser. 2},
      volume={189},
      publisher={Amer. Math. Soc.},
      place={Providence, RI},
   },
   date={1999},
   pages={69--84}
}

\bib{DiaEva2001}{article}{
   author={Diaconis, P.},
   author={Evans, S.},
   title={Linear functionals of eigenvalues of random matrices},
   journal={Trans. Amer. Math. Soc.},
   volume={353},
   date={2001},
   number={7},
   pages={2615--2633}
}

\bib{DiaSha1994}{article}{
   author={Diaconis, P.},
   author={Shahshahani, M.},
   title={On the eigenvalues of random matrices},
   note={Studies in applied probability},
   journal={J. Appl. Probab.},
   volume={31A},
   date={1994},
   pages={49--62}
}

\bib{DysMeh63}{article}{
   author={Dyson, F.},
   author={Mehta, M.},
   title={Statistical Theory of the Energy Levels of Complex Systems. IV},
   journal={J. Math. Phys.},
   number={4}
   date={1963},
   pages={701--712}
}

\bib{Fuj1999}{article}{
   author={Fujii, A.}, 
   title={Explicit formulas and oscillations}, 
   journal={Emerging Applications of Number Theory}, 
   note={D.A. Hejhal, J. Friedman, M.C. Gutzwiller, A.M. Odlyzko, eds. (Springer, 1999)}, 
   pages={219--267},
   date={1999}}

\bib{FyoHiaKea2012}{article}{
    author={Fyodorov, Y.},
    author={Hiary, G.},
    author={Keating, J.},
    title={Freezing Transition, Characteristic Polynomials of Random Matrices, and the Riemann Zeta-Function},
   journal={preprint,  arxiv:1202.4713}
}

\bib{HugMil2007}{article}{
   author={Hughes, C. P.},
   author={Miller, S. J.},
   title={Low--lying zeros of L--functions with orthogonal symmetry},
   journal={Duke Math. J.},
   volume={136},
   date={2007},
   number={8},
   pages={115--172}
}

\bib{HugRud2002}{article}{
   author={Hughes, C. P.},
   author={Rudnick, Z.},
   title={Linear statistics for zeros of Riemann's zeta function},
   language={English, with English and French summaries},
   journal={C. R. Math. Acad. Sci. Paris},
   volume={335},
   date={2002},
   number={1},
   pages={667--670}
}

\bib{HugRud2003}{article}{
   author={Hughes, C. P.},
   author={Rudnick, Z.},
   title={Mock-Gaussian behaviour for linear statistics of classical compact
   groups},
   note={Random matrix theory},
   journal={J. Phys. A},
   volume={36},
   date={2003},
   number={12},
   pages={2919--2932}
}

\bib{Joh1988}{article}{
   author={Johansson, K.},
   title={On Szeg\H o's asymptotic formula for Toeplitz determinants and
   generalizations},
   language={English, with French summary},
   journal={Bull. Sci. Math. (2)},
   volume={112},
   date={1988},
   number={3},
   pages={257--304}}

\bib{Joh1998}{article}{
   author={Johansson, K.},
   title={On fluctuations of eigenvalues of random Hermitian matrices},
   journal={Duke Math. J.},
   volume={91},
   date={1998},
   number={1},
   pages={151--204}
}

\bib{Joh1997}{article}{
   author={Johansson, K.},
   title={On random matrices from the compact classical groups},
   journal={Ann. of Math. (2)},
   volume={145},
   date={1997},
   number={3},
   pages={519--545}}

\bib{Joh2004}{article}{
   author={Johansson, K.},
   title={Determinantal processes with number variance saturation},
   journal={Comm. Math. Phys.},
   volume={252},
   date={2004},
   number={1-3},
   pages={111--148}
}

\bib{Joy1986}{book}{
   author={Joyner, D.},
   title={Distribution theorems of $L$-functions},
   series={Pitman Research Notes in Mathematics Series},
   volume={142},
   publisher={Longman Scientific \& Technical},
   place={Harlow},
   date={1986},
   pages={x+247}
}

\bib{Kac1954}{article}{
   author={Kac, M.},
   title={Toeplitz matrices, translation kernels and a related problem in
   probability theory},
   journal={Duke Math. J.},
   volume={21},
   date={1954},
   pages={501--509}
}

\bib{KatSar1999}{book}{
   author={Katz, N. M.},
   author={Sarnak, P.},
   title={Random matrices, Frobenius eigenvalues, and monodromy},
   series={American Mathematical Society Colloquium Publications},
   volume={45},
   publisher={American Mathematical Society},
   place={Providence, RI},
   date={1999},
   pages={xii+419}
}

\bib{KeaSna2000I}{article}{
   author={Keating, J. P.},
   author={Snaith, N. C.},
   title={Random matrix theory and $\zeta(1/2+it)$},
   journal={Comm. Math. Phys.},
   volume={214},
   date={2000},
   number={1},
   pages={57--89}
}

\bib{Mon1973}{article}{
   author={Montgomery, H. L.},
   title={The pair correlation of zeros of the zeta function},
   conference={
      title={Analytic number theory (Proc. Sympos. Pure Math., Vol. XXIV,
      St. Louis Univ., St. Louis, Mo., 1972)},
   },
   book={
      publisher={Amer. Math. Soc.},
      place={Providence, R.I.},
   },
   date={1973},
   pages={181--193}
}

\bib{MonVau1974}{article}{
   author={Montgomery, H. L.},
   author={Vaughan, R. C.},
   title={Hilbert's inequality},
   journal={J. London Math. Soc. (2)},
   volume={8},
   date={1974},
   pages={73--82}
}

\bib{MonVau2007}{book}{
   author={Montgomery, H. L.},
   author={Vaughan, R. C.},
   title={Multiplicative number theory. I. Classical theory},
   series={Cambridge Studies in Advanced Mathematics},
   volume={97},
   publisher={Cambridge University Press},
   place={Cambridge},
   date={2007},
   pages={xviii+552}}

\bib{Rod2012}{article}{
   author={Rodgers, B.},
   title={A Central Limit Theorem for the Zeroes of the Zeta Function },
   note={\url{math.ucla.edu/~brodgers/CLTzeta.pdf}}
}

\bib{RudSar1996}{article}{
author={Rudnick,Z.},
author={Sarnak, P.},
title={Zeros of principal L-functions and random matrix theory},
journal={Duke Jour. of Math.},
number={81} ,
date={1996},
pages={269--322}}

\bib{Sel1944}{article}{
   author={Selberg, A.},
   title={On the remainder in the formula for $N(T)$, the number of zeros of
   $\zeta(s)$ in the strip $0<t<T$},
   journal={Avh. Norske Vid. Akad. Oslo. I.},
   volume={1944},
   date={1944},
   number={1},
   pages={27}
}

\bib{Sel1946}{article}{
   author={Selberg, A.},
   title={Contributions to the theory of the Riemann zeta-function},
   journal={Arch. Math. Naturvid.},
   volume={48},
   date={1946},
   number={5},
   pages={89--155}
}

\bib{Sel1992}{article}{
   author={Selberg, A.},
   title={Old and new conjectures and results about a class of Dirichlet
   series},
   conference={
      title={Proceedings of the Amalfi Conference on Analytic Number Theory
      (Maiori, 1989)},
   },
   book={
      publisher={Univ. Salerno},
      place={Salerno},
   },
   date={1992},
   pages={367--385}
}

\bib{Sim2005}{book}{
   author={Simon, B.},
   title={Orthogonal polynomials on the unit circle. Part 1},
   series={American Mathematical Society Colloquium Publications},
   volume={54},
   note={Classical theory},
   publisher={American Mathematical Society},
   place={Providence, RI},
   date={2005},
   pages={xxvi+466}
}

\bib{Sos2000}{article}{
   author={Soshnikov, A.},
   title={The central limit theorem for local linear statistics in classical
   compact groups and related combinatorial identities},
   journal={Ann. Probab.},
   volume={28},
   date={2000},
   number={3},
   pages={1353--1370}
}

\bib{Sos2002}{article}{
   author={Soshnikov, A.},
   title={Gaussian limit for determinantal random point fields},
   journal={Ann. Probab.},
   volume={30},
   date={2002},
   number={1},
   pages={171--187}
}

\bib{Sou2009}{article}{
   author={Soundararajan, K.},
   title={Moments of the Riemann zeta function},
   journal={Ann. of Math. (2)},
   volume={170},
   date={2009},
   number={2},
   pages={981--993}
}

\bib{Tit1986}{book}{
   author={Titchmarsh, E. C.},
   title={The theory of the Riemann zeta-function},
   edition={2},
   note={Edited and with a preface by D. R. Heath-Brown},
   publisher={The Clarendon Press Oxford University Press},
   place={New York},
   date={1986},
   pages={x+412}
}

\bib{Wie2002}{article}{
   author={Wieand, K.},
   title={Eigenvalue distributions of random unitary matrices},
   journal={Probab. Theory Relat. Fields},
   volume={123},
   date={2002},
   number={2},
   pages={202--224}
}
 \end{biblist}

\end{bibdiv}

\end{document}